\newtheorem{theorem}{Theorem}[section]
\newtheorem{lemma}[theorem]{Lemma}
\newtheorem*{theorem*}{Question}
\newtheorem{proposition}[theorem]{Proposition}
\newtheorem{corollary}[theorem]{Corollary}
\theoremstyle{definition}
\theoremstyle{remark}
\newtheorem{remark}[theorem]{Remark}
\numberwithin{equation}{section}
\begin{document}

\setcounter{page}{1}

\title[On the convexity of the Berezin range]{On the  convexity of the Berezin range of Composition operators and related questions}
\author[Athul Augustine, M. Garayev \MakeLowercase{and} P. Shankar]{Athul Augustine, M. Garayev \MakeLowercase{and} P. Shankar}

\address{Athul Augustine, Department of Mathematics, Cochin University of Science And Technology,  Ernakulam, Kerala - 682022, India. }
\email{\textcolor[rgb]{0.00,0.00,0.84}{athulaugus@gmail.com, athulaugus@cusat.ac.in}}

\address{M. Garayev, Department of Mathematics, College of Science , King Saud University, P.OBox 2455Riyadh 11451, Saudi Arabia }
\email{\textcolor[rgb]{0.00,0.00,0.84}{mgarayev@ksu.edu.sa}}

\address{P. Shankar, Department of Mathematics, Cochin University of Science And Technology,  Ernakulam, Kerala - 682022, India.}
\email{\textcolor[rgb]{0.00,0.00,0.84}{shankarsupy@gmail.com, shankarsupy@cusat.ac.in}}

\subjclass[2020]{Primary 47B32 ; Secondary 52A10.}

\keywords{Berezin transform; Berezin range; Berezin set; Convexity; Composition operator; Dirichlet space; Fock space; Elliptic range theorem}


\begin{abstract}
The Berezin range of a bounded operator $T$ acting on a reproducing kernel Hilbert space $\mathcal{H}$ is the set $B(T)$ := $\{\langle T\hat{k}_{x},\hat{k}_{x} \rangle_{\mathcal{H}} : x \in X\}$, where $\hat{k}_{x}$ is the normalized reproducing kernel for $\mathcal{H}$ at $x \in X$. In general, the Berezin range of an operator is not convex. Primarily, we focus on characterizing the convexity of the Berezin range for a class of composition operators acting on the Fock space on $\mathbb{C}$ and the Dirichlet space of the unit disc $\mathbb{D}$. We prove an analogue of the elliptic range theorem for the unitarily equivalent Berezin range of an operator on a two-dimensional reproducing kernel Hilbert space and characterize the convexity of the unitarily equivalent Berezin range for a bounded operator $T$ on a reproducing kernel Hilbert space $\mathcal{H}$.

\end{abstract}
\maketitle

\section{Introduction}

The reproducing kernel Hilbert space (RKHS)  \cite{paulsen2016introduction} is a Hilbert space $\mathcal{H} = \mathcal{H}(X)$ of
complex-valued functions on some set $X$ such that the evaluation functional $E_{y}: \mathcal{H}\rightarrow \mathbb{C}$, defined by $E_{y}(f) = f(y)$ is bounded on $\mathcal{H}$ for each point $y \in X$. If $\mathcal{H}$ is a reproducing kernel Hilbert space on $X$ and $x \in X$, then by the Riesz representation theorem in functional analysis, there exists a unique function $k_x \in \mathcal{H}$ such that $E_{x}(f)=f(x) = \langle f, k_x \rangle_{\mathcal{H}}$ for all $f \in \mathcal{H}$. The element $k_x$ is called the \textit{reproducing kernel} at $x$ and the \textit{normalized reproducing} at $x$ denoted by $\hat{k}_x =  k_x/||k_x||_{\mathcal{H}}$. Let $B(\mathcal{H})$ denote the set  of all  bounded linear operators  on the Hilbert space $\mathcal{H}$. For $x \in X$, the Berezin transform of an operator $T \in B(\mathcal{H})$ at $x$ is defined as $\widetilde{T}(x) := \langle T \hat{k}_x,\hat{k}_x\rangle_\mathcal{H}.$ Then the Berezin range of T is defined as
$$\text{Ber}(T) := \{\langle T \hat{k}_x,\hat{k}_x\rangle_\mathcal{H}: x\in X\}$$
where $\hat{k}_x$ is the normalized reproducing kernel for $\mathcal{H}$ at $x \in X$.

F.A Berezin \cite{berezin1972covariant} introduced the Berezin transform of an operator on a reproducing kernel Hilbert space as a tool in quantization. The Berezin transform of an operator provides essential information about the operator. In particular, it is known that on the most familiar reproducing kernel Hilbert spaces, including the Hardy space, the Bergman space, the Dirichlet space and the Fock spaces, the Berezin transform uniquely determines the operator, i.e., if $T_1,T_2 \in B(\mathcal{H})$, then $T_1=T_2$ if and only if $\widetilde{T}_1 = \widetilde{T}_2$. 

Consider a bounded linear operator $T$ on a Hilbert space $\mathcal{H}$. Then the \textit{numerical range} of $T$ is defined as 
$$W(T):= \{\langle Tu,u\rangle : \|u\| = 1 \}.$$
By the Toeplitz-Hausdorff theorem \cite{toeplitz}
the numerical range of a linear operator on a Hilbert space is always convex. It is easy to observe that the Berezin range of an operator in a reproducing kernel Hilbert space is a subset of its numerical range. Given that the convexity of the numerical range is arguably its most important characteristic, we are prompted to pose the primary question explored in this paper:

\begin{theorem*}
 Given a
class of concrete operators acting on a reproducing kernel Hilbert space $\mathcal{H}$, what can be said about the convexity of the Berezin range of these operators?
\end{theorem*}

The study on the geometry of the Berezin range was initiated by Karaev \cite{karaev2013reproducing}. The convexity of the Berezin range for finite-dimensional
matrices, multiplication operators, and a class of composition operators acting
on the Hardy space of the unit disc were characterized in \cite{cowen22}. In \cite{augustine2023composition}, characterization of the convexity of the Berezin range was extended to infinite dimensional matrices, composition operators on the Hardy space for the general case of the
elliptic symbol, and composition operators on the Bergman space for the general case of elliptic symbol and the Blaschke factor.

This paper is the natural continuation of the results in \cite{augustine2023composition, cowen22, karaev2013reproducing}.  A relation between the Berezin range $\text{Ber}(T)$ of an operator $T$ and its numerical range was explored in \cite{karaev2013reproducing}. So another important question of our interest is as follows. Given an operator $T$ on an RKHS $\mathcal{H}$, are there any properties of $W(T)$ that can be deduced from $\text{Ber}(T)$?

Consider a bounded linear operator $T$ on a Hilbert space $\mathcal{H}$. Then the \textit{unitarily equivalent Berezin range}  $\mathcal{B}(T)$ is defined as the set of all Berezin transforms of all operators that are unitarily equivalent to $T$. That is, if $\mathcal{A}$ is the collection of all operators that are unitarily equivalent to $T$, then  
	$$\mathcal{B}(T) = \bigcup_{A\in \mathcal{A}} \text{Ber}(A).$$ 
This set was introduced in \cite{nordgren1994boundary} to characterize the compactness of an operator on a standard functional Hilbert space. This paper establishes an analogue of the elliptic range theorem for the unitarily equivalent Berezin range on a two-dimensional Hilbert space. In the proof of this theorem, we explicitly provide the unitary matrix that maps the normalized reproducing kernel to each element in the numerical range of the operator.

This paper is organized as follows. In Section 2, we characterize the convexity
of the Berezin range for a class of composition operators on the Fock space. In Section 3, we characterize the convexity of the Berezin range for a class of  composition operators on the Dirichlet space of the unit disc. In Section 4, we prove an analogous result of the
elliptic range theorem for the unitarily equivalent Berezin range.  As a corollary, we demonstrate that the unitarily equivalent Berezin range equals the numerical range for any bounded linear operator in a reproducing kernel Hilbert space.

\section{Composition operator on Fock space}
Let $d\nu (z)$ denote the Lebesgue measure on the complex plane $\mathbb{C}$. Consider $\mathbb{C}$ with Gaussian measure 
$$d \mu (z) = \frac{1}{\pi}e^{-|z|^2}d\nu (z).$$ 
The Fock space $F^2(\mathbb{C})$ \cite{zhu2012analysis} consists of the Gaussian square integrable entire functions on $\mathbb{C}$. $F^2(\mathbb{C})$ is a closed subspace of $L^2(\mathbb{C},d\mu)$. The Fock space $F^2(\mathbb{C})$ is a reproducing kernel Hilbert space with the following inner product inherited from $L^2(\mathbb{C},d\mu)$:
$$\langle f,g \rangle = \int_\mathbb{C} f(z)\overline{g(z)}d \mu (z).$$
 The reproducing kernel of $F^2(\mathbb{C})$ is given by $k_w(z)=K(z,w) = e^{z\bar{w}} ~~~\text{where}~~~ z,w\in\mathbb{C}.$

Let $\phi$ be a complex-valued function $\phi : X\longrightarrow X$. A composition operator $C_\phi$ acting on a space of functions is defined by 
$$ C_\phi f := f \circ \phi.$$

Acting on the Fock space $F^2(\mathbb{C})$, these operators have the Berezin transform
\begin{center}
  	\begin{equation*}
  		\begin{split}
  			\widetilde{C_{\phi}}(z) &=\langle C_{\phi}\hat{k}_{z},\hat{k}_{z}\rangle\\
  			&=\frac{1}{||k_z||^2}\langle C_{\phi}{k_{z}},{k_{z}}\rangle\\
  			&=\frac{1}{e^{|z|^2}} k_{z}(\phi(z)).
  		\end{split}
  	\end{equation*}
  \end{center}
  
Carswell, MacCluer, and Schuster \cite{carswell2003composition} characterized the bounded composition operators acting on the Fock space $F^2_\alpha$ of holomorphic functions on $\mathbb{C}^n$. Since we focus on the entire functions on $\mathbb{C}$, we state their result for the case $n=1$.
\begin{theorem} \cite{carswell2003composition} \label{carswell}
The composition operator $C_\phi$ is bounded on the Fock space $F^2(\mathbb{C})$ only when $\phi(z)= \zeta z + a$ with $|\zeta|\leq 1.$ Conversely, suppose that $\phi(z)= \zeta z + a$. If $|\zeta|=1$ then $C_\phi$ is bounded on $F^2(\mathbb{C})$ if and only if $a=0$. If $|\zeta|<1$ then  $C_\phi$ acts compactly on $F^2(\mathbb{C})$. 
\end{theorem}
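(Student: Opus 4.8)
The plan is to let everything flow from a single computation: the action of the adjoint $C_\phi^*$ on reproducing kernels. Since $\langle C_\phi f, k_w\rangle = f(\phi(w)) = \langle f, k_{\phi(w)}\rangle$ for every $f \in F^2(\mathbb{C})$, one gets $C_\phi^* k_w = k_{\phi(w)}$, and hence, using $\|k_w\|^2 = K(w,w) = e^{|w|^2}$,
\[
\|C_\phi^*\hat k_w\|^2 = \frac{\|k_{\phi(w)}\|^2}{\|k_w\|^2} = e^{|\phi(w)|^2 - |w|^2}.
\]
I would treat the exponent $\Phi(w) := |\phi(w)|^2 - |w|^2$ as the one quantity governing both boundedness and compactness. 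Note first that if $C_\phi$ is bounded, then it carries the identity function $z \in F^2(\mathbb{C})$ to $C_\phi z = \phi$, so $\phi$ is automatically entire.

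For the necessity direction, boundedness gives $\|C_\phi^*\hat k_w\|\le \|C_\phi^*\| = \|C_\phi\|$ for all $w$, hence $\Phi(w)\le M:=2\log\|C_\phi\|$ uniformly, i.e. $|\phi(w)|\le |w|+\sqrt{M}$. Thus the entire function $\phi$ has at most linear growth with slope $\le 1$; applying the Cauchy estimates to its Taylor coefficients $a_n$ over circles $|z|=R$ and letting $R\to\infty$ forces $a_n=0$ for $n\ge 2$ and $|a_1|\le 1$. Therefore $\phi(z)=\zeta z+a$ with $|\zeta|\le 1$.

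For the converse I would expand $\Phi(w) = (|\zeta|^2-1)|w|^2 + 2\,\mathrm{Re}(\bar a\zeta w) + |a|^2$ and split into three cases. If $|\zeta|=1$ and $a=0$, then $\phi(z)=\zeta z$, and the change of variables $w=\zeta z$ (which preserves both $|w|$ and $d\nu$) shows $C_\phi$ is a unitary isometry, hence bounded. If $|\zeta|=1$ and $a\neq 0$, the quadratic term vanishes and the linear term is driven to $+\infty$ along the ray $w = t\,a\bar\zeta/|a|$, $t\to\infty$, so $\|C_\phi^*\hat k_w\|\to\infty$ and $C_\phi$ is unbounded. If $|\zeta|<1$, the leading coefficient $|\zeta|^2-1$ is negative, so $\Phi(w)\to-\infty$ as $|w|\to\infty$; in particular $\Phi$ is bounded above, which already yields boundedness. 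To upgrade this to compactness I would show $C_\phi$ is Hilbert--Schmidt: using an orthonormal basis together with the reproducing property, $\|T\|_{HS}^2 = \int_{\mathbb{C}}\|T^*k_w\|^2\,d\mu(w)$ for any $T$ on $F^2(\mathbb{C})$, so for $T=C_\phi$ one gets
\[
\|C_\phi\|_{HS}^2 = \int_{\mathbb{C}} e^{|\phi(w)|^2}\,d\mu(w) = \frac{1}{\pi}\int_{\mathbb{C}} e^{\Phi(w)}\,d\nu(w),
\]
and since $\Phi$ is a real quadratic with negative leading coefficient when $|\zeta|<1$, this Gaussian integral converges. Hence $C_\phi$ is Hilbert--Schmidt, and therefore compact.

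The main obstacle I anticipate is precisely this compactness step: the estimates used for boundedness do not by themselves yield compactness, as the unitary case $|\zeta|=1,\ a=0$ shows, so the strict inequality $|\zeta|<1$ must be used genuinely. The cleanest conversion of the decay of $\Phi$ into compactness is the Hilbert--Schmidt integral above; the one point requiring care there is the interchange of summation and integration, which I would justify by monotone convergence applied to the nonnegative terms $\sum_n |Te_n(w)|^2 = \|T^*k_w\|^2$, so that the integral formula for $\|C_\phi\|_{HS}^2$ is valid.
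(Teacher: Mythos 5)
The paper itself offers no proof of this statement: it is quoted as Theorem~2.1 directly from \cite{carswell2003composition}, so your proposal can only be compared with the literature, not with an in-paper argument. Your overall strategy is the standard one, and most of it is correct: the identity $C_\phi^* k_w = k_{\phi(w)}$, the resulting bound $\Phi(w):=|\phi(w)|^2-|w|^2\le 2\log\|C_\phi\|$, and the Cauchy estimates forcing $\phi(z)=\zeta z+a$ with $|\zeta|\le 1$ give a complete necessity argument (note $\|C_\phi\|\ge 1$ since $C_\phi 1=1$, so your $M\ge 0$). The two cases with $|\zeta|=1$ are also fine: rotation invariance of the Gaussian gives unitarity when $a=0$, and when $a\ne 0$ the blow-up of $e^{\Phi(w)}$ along the ray $w=ta\bar\zeta/|a|$ contradicts the bound $\|C_\phi^*\hat k_w\|\le\|C_\phi\|$ that boundedness would impose.

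The genuine gap is in the case $|\zeta|<1$, and it has two linked parts. First, the claim that $\Phi$ being bounded above ``already yields boundedness'' is an invalid inference: finiteness of $\sup_w\|C_\phi^*\hat k_w\|$ is the \emph{necessary} direction, not a sufficient one. Controlling a linear map on the set of reproducing kernels does not control it on linear combinations of kernels, so nothing is yet proved. Second, the Hilbert--Schmidt argument meant to rescue this is circular as written: the formula $\|T\|_{HS}^2=\int_{\mathbb{C}}\|T^*k_w\|^2\,d\mu(w)$, and your monotone-convergence justification of it, both invoke the adjoint $C_\phi^*$, which exists only after $C_\phi$ is known to be bounded---precisely what is at stake in this branch. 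The repair is short and standard. Either first prove boundedness directly from the reproducing property,
\begin{equation*}
\|C_\phi f\|^2=\int_{\mathbb{C}}|\langle f,k_{\phi(w)}\rangle|^2\,d\mu(w)\le \|f\|^2\,\frac{1}{\pi}\int_{\mathbb{C}}e^{\Phi(w)}\,d\nu(w)<\infty ,
\end{equation*}
where the Gaussian integral converges exactly because the leading coefficient $|\zeta|^2-1$ is negative, and only then run your Hilbert--Schmidt computation with the now-legitimate adjoint; or avoid the adjoint entirely by computing $\sum_n\|C_\phi e_n\|^2=\int_{\mathbb{C}}\sum_n|e_n(\phi(w))|^2\,d\mu(w)=\int_{\mathbb{C}}\|k_{\phi(w)}\|^2\,d\mu(w)$ via Parseval ($\sum_n|e_n(z)|^2=\|k_z\|^2$) and Tonelli, and then deduce boundedness from this finite sum by Cauchy--Schwarz together with the fact that norm convergence in $F^2(\mathbb{C})$ implies pointwise convergence. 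With either repair, $C_\phi$ is Hilbert--Schmidt, hence compact, and the theorem is complete.
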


For $z \in \mathbb{C}$ and $\zeta \in \overline{\mathbb{D}}$ and $a=0$ let $\phi(z) = \zeta z$ and consider the composition operator $C_{\phi}f = f\circ \phi.$ Acting on $F^2(\mathbb{C})$, we have
  \begin{center}
  	\begin{equation}\label{2.1}
  		\begin{split}
  			\widetilde{C_{\phi}}(z) &=\langle C_{\phi}\hat{k}_{z},\hat{k}_{z}\rangle\\
  			&=\frac{1}{e^{|z|^2}} k_{z}(\phi(z))\\
  			&=\frac{1}{e^{|z|^2}} k_{z}(\zeta z)\\
  			&=\frac{1}{e^{|z|^2}} e^{\zeta |z|^2}\\
  			&= e^{(\zeta - 1) |z|^2}
  		\end{split}
  	\end{equation}
  \end{center}
 The Berezin range of these operators is not always convex, as shown in Figure \ref{fig1}. Here, we try to find the values of $\zeta$ for which $B(C_\phi)$ is convex.

  \begin{figure}[h]
  	\caption{$B(C_\phi)$ on $F^2(\mathbb{C})$ for $\zeta=0.5e^{i\frac{\pi}{3}}$(apparently not convex).}
  	\includegraphics[scale=0.5]{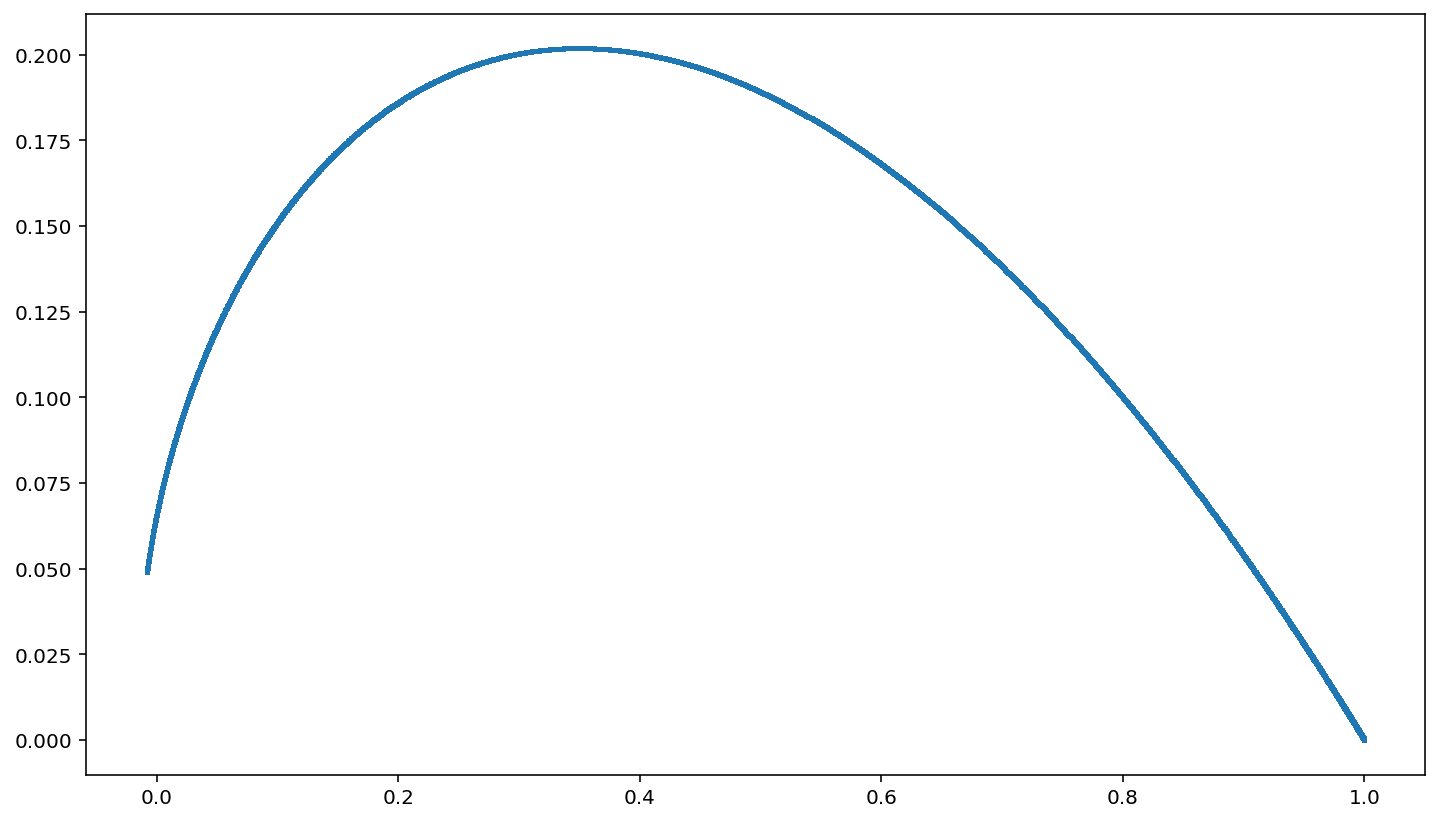}
  	\label{fig1}
  	 \end{figure}

\begin{theorem}\label{elliptic}                                                                                                                                                                                                                                                                                                                                                                                                                                                                                                                                                                                                                                                                                                                                                                                                                                                                                                                                                                                                                                                                                                                                                                                                                                                                                                                                                                                                                                                                                                                                                                                                                                                                                                                                                                                                                                                                                                                                                                                                                                                                                                                                                                                                                                                                                                                                                                                                                                                                                                                                                                     
Let $z\in \mathbb{C}$,  $\zeta \in \overline{\mathbb{D}}$ and $\phi(z)=\zeta z$. Then the Berezin range of $C_\phi$ acting on $F^2(\mathbb{C})$ is convex if and only if $\zeta \in [-1,1]$.
\end{theorem}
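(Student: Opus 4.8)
The plan is to work directly from the explicit formula for the Berezin transform obtained in \eqref{2.1}. Setting $t=|z|^{2}$, which ranges over $[0,\infty)$ as $z$ ranges over $\mathbb{C}$, the Berezin range collapses to the image of a single real parameter,
$$B(C_\phi)=\{\,e^{(\zeta-1)t}:t\ge 0\,\}.$$
Everything then reduces to understanding the planar curve $t\mapsto e^{wt}$ with $w=\zeta-1$, and I would treat the two implications separately.

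For sufficiency I would assume $\zeta\in[-1,1]$, so that $w=\zeta-1$ is real with $w\in[-2,0]$. If $\zeta=1$ then $w=0$ and $B(C_\phi)=\{1\}$ is trivially convex; if $\zeta\in[-1,1)$ then $w<0$ and $t\mapsto e^{wt}$ is a strictly decreasing bijection of $[0,\infty)$ onto $(0,1]$, so $B(C_\phi)=(0,1]$ is an interval and hence convex. This direction is routine.

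The substance lies in the necessity direction, which I would prove by contraposition: assuming $\zeta\notin[-1,1]$, I must exhibit a chord of $B(C_\phi)$ leaving the set. Since $\zeta\in\overline{\mathbb{D}}$, the condition $\zeta\notin[-1,1]$ is equivalent to $\operatorname{Im}\zeta\neq 0$, and $\operatorname{Im}\zeta\neq 0$ together with $|\zeta|\le 1$ forces $\operatorname{Re}\zeta<1$. Writing $w=\zeta-1=\alpha+i\beta$, we then have $\alpha<0$ and $\beta\neq 0$, so
$$e^{wt}=e^{\alpha t}\bigl(\cos\beta t+i\sin\beta t\bigr)$$
is a genuine logarithmic spiral: its modulus $e^{\alpha t}$ decreases strictly from $1$ to $0$ while its argument $\beta t$ runs monotonically over a ray.

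The key geometric step, which I expect to be the crux of the argument, is to locate where this spiral meets the positive real axis. This occurs exactly at $t_{n}=2\pi n/|\beta|$ for $n=0,1,2,\dots$, where the value equals $q^{n}$ with $q=e^{2\pi\alpha/|\beta|}\in(0,1)$; in particular $1=q^{0}$ and $q=q^{1}$ both lie in $B(C_\phi)$. I would then show that the open segment $(q,1)$ on the real axis contains no point of $B(C_\phi)$: a point $e^{wt}$ has modulus in $(q,1)$ only for $t\in(0,2\pi/|\beta|)$, and on that open interval the argument $\beta t$ sweeps an open interval of length $2\pi$ containing no integer multiple of $2\pi$, so $e^{wt}$ is never a positive real there. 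Hence the chord joining $q,1\in B(C_\phi)$ is not contained in $B(C_\phi)$, and the range fails to be convex. Combining the two directions yields the claimed equivalence.
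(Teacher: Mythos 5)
Your proof is correct, and the necessity direction takes a genuinely different route from the paper's. The paper also reduces everything to the one-parameter curve $r\mapsto e^{(a+ib)r^2}$, but then argues indirectly: it invokes the fact that a convex path in $\mathbb{C}$ must be a point or a line segment, splits into the cases $a=0$ (unit circle) and $a\neq 0$, and in the latter case runs a collinearity computation on three points of the curve, substituting $\rho=\sqrt{\pi/b}$ to force the contradiction $e^{a\pi/b}=1$. You instead exhibit an explicit violated chord: the spiral $t\mapsto e^{\alpha t}e^{i\beta t}$ meets the positive real axis exactly at the geometric sequence $q^{n}$, $q=e^{2\pi\alpha/|\beta|}$, and the open segment $(q,1)$ between two consecutive crossings is disjoint from the range because the modulus pins $t$ to $\bigl(0,2\pi/|\beta|\bigr)$, where the argument never returns to $0$ modulo $2\pi$. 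This buys you three things: you avoid relying on the unproved topological lemma that a convex path is a point or segment; your use of $|\beta|$ handles both signs of $\operatorname{Im}\zeta$ uniformly, whereas the paper's substitution $\rho=\sqrt{\pi/b}$ tacitly assumes $b>0$; and your observation that $|\zeta|\le 1$ together with $\operatorname{Im}\zeta\neq 0$ forces $\operatorname{Re}\zeta<1$ shows the paper's case $a=0$ is vacuous under the theorem's hypothesis $\zeta\in\overline{\mathbb{D}}$, so your case analysis is both shorter and sharper. The paper's argument, by contrast, is purely computational once the path lemma is granted.
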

\begin{proof}
Let us prove the backward implication first. Suppose that $\zeta=1$. Then  $\phi(z)=z$. Equation (\ref{2.1}) implies that 
$$\widetilde{C_{\phi}}(z) = e^{(\zeta - 1) |z|^2} = e^0 = 1.$$  
Thus, $\text{Ber}(C_\phi)=\{1\}$, which is a singleton set and is a convex set in $\mathbb{C}.$ 

Now suppose that $\zeta \in [-1,1).$ Let $z=re^{i\theta}$. Then for $0\leq r < \infty$,
$$\text{Ber}(C_\phi)= \{e^{(\zeta - 1) r^2} : r\in[0,\infty)\}=(0,1]$$ 
 which is also convex in $\mathbb{C}$.

Conversely, we need to prove that if $\text{Ber}(C_\phi)$ is convex then $\zeta \in [-1,1]$.  It is enough to prove that if the imaginary part of $\zeta$ is non-zero, then $\text{Ber}(C_\phi)$ is not convex. Assume that the imaginary part of $\zeta \neq 0.$ Let $\zeta-1 = a+ib$ for $b\neq 0$. Then
$$\widetilde{C_{\phi}}(re^{i\theta}) = e^{(a+ib) r^2}=e^{ar^2}e^{ibr^2} \qquad r\in [0,\infty),$$
which is a function independent of $\theta.$ Therefore, $\text{Ber}(C_\phi)$ is just a path in 
$\mathbb{C}.$ A path in $\mathbb{C}$ can be convex if and only if it is a point or a line segment.

 For $a=0,~~~ \widetilde{C_{\phi}}(re^{i\theta}) = e^{ibr^2}$. Since $r\in [0,\infty)$, $\text{Ber}(C_\phi)$ is nothing but the unit circle, which is not convex. 
 
 Now for $a\neq 0$, conversely assume that $\text{Ber}(C_\phi)$ is convex. We have
$$\widetilde{C_{\phi}}(re^{i\theta}) =e^{ar^2}e^{ibr^2} \qquad r\in [0,\infty).$$
Our assumption, $\text{Ber}(C_\phi)$ is convex, will imply that it is a point or a line segment. For $r=0$, it is easy to observe that $\widetilde{C_{\phi}}(re^{i\theta}) = 1$ and for $r=1$, $\widetilde{C_{\phi}}(re^{i\theta}) =e^{a}e^{ib}$.  Since $a$ and $b$ are non-zero real numbers, $1$ and $e^{a}e^{ib}$ are two distinct points in the complex plane. Thus, $\text{Ber}(C_\phi)$ is a line segment. Then, every point in $\text{Ber}(C_\phi)$ is collinear. Consider the points $1$ and $e^{a}e^{ib}$ in $\text{Ber}(C_\phi)$ corresponding to the values of $r=0$ and $r=1$ respectively, and an arbitraty point in $\text{Ber}(C_\phi)$, $e^{a\rho^2}e^{ib\rho^2}$ where $\rho \in [0,\infty)$. In the $\mathbb{R}^2$ plane, these three points can  be viewed as $(1,0), (e^a cosb,e^a sinb)$ and $(e^{a\rho^2} cos(b\rho^2), e^{a\rho^2} sin(b\rho^2))$ respectively. Then, from collinearity, we have
 $$1(e^a sinb-e^{a\rho^2} sin(b\rho^2)) + e^a cosb (e^{a\rho^2} sin(b\rho^2) - 0 ) + e^{a\rho^2} cos(b\rho^2)(0 - e^a sinb) = 0.$$
 Rearranging and simplifying the above equation, we get
 $$e^a sinb = e^{a\rho^2}\left[sin(b\rho^2) - e^asin(b\rho^2+b)\right] \quad \forall \rho \in [0,\infty)$$
 In particular, put $\rho = \sqrt{\pi/b}$ in the above equation. Then
 \begin{center}
 \begin{equation*}
 \begin{split}
 e^a sinb &= e^{\frac{a\pi}{b}}\left[sin(\pi) - e^asin(\pi+b)\right]\\
 &=e^{\frac{a\pi}{b}}\left[e^asin b\right].
 \end{split}
 \end{equation*}
 \end{center}
 By cancelling $e^a sinb$ both sides we get
 $$ e^{\frac{a\pi}{b}} =1 = e^0$$
 This implies that $a\pi = 0$, thus $a=0$. This contradicts our assumption $a\neq 0$. So $\text{Ber}(C_\phi)$ is not convex. Therefore, if the imaginary part of $\zeta$ is non-zero, then $\text{Ber}(C_\phi)$ is not convex.
\end{proof}
By Theorem \ref{carswell}, for $\phi(z) = \zeta z + a$ and $|\zeta| = 1$, $C_\phi$ is bounded on $F^2(\mathbb{C})$ if and only if $a=0$. So if we take $\zeta \in \mathbb{T}$, $C_\phi$ is bounded on $F^2(\mathbb{C})$ if and only if  $\phi (z) =\zeta z$. In that case, the convexity of the Berezin range can be characterized completely as follows:
\begin{corollary}\label{fock}
Let $z\in \mathbb{C}$,  $\zeta \in \mathbb{T}$ and $\phi(z)=\zeta z$. Then the Berezin range of $C_\phi$ acting on $F^2(\mathbb{C})$ is convex if and only if $\zeta = -1 $ or $\zeta = 1 $.
\end{corollary}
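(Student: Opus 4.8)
The plan is to obtain this as an immediate specialization of Theorem \ref{elliptic}, since the heavy lifting has already been done there. The discussion preceding the corollary already records, via Theorem \ref{carswell}, that whenever $|\zeta| = 1$ the operator $C_\phi$ with symbol $\phi(z) = \zeta z + a$ is bounded on $F^2(\mathbb{C})$ precisely when $a = 0$. Hence, under the standing hypothesis $\zeta \in \mathbb{T}$, the only admissible bounded symbols are $\phi(z) = \zeta z$, which is exactly the family treated in Theorem \ref{elliptic}. So the first thing I would do is note that we may invoke that theorem verbatim on this subfamily.

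Next I would apply Theorem \ref{elliptic} directly: it asserts that $\text{Ber}(C_\phi)$ is convex if and only if $\zeta \in [-1,1]$. The remaining step is purely a matter of intersecting the two constraints on $\zeta$. I impose the standing hypothesis $\zeta \in \mathbb{T}$ together with the convexity condition $\zeta \in [-1,1]$. Since $[-1,1]$ consists of real numbers of modulus at most $1$, the only elements of $[-1,1]$ that also lie on the unit circle are $\zeta = -1$ and $\zeta = 1$; that is, $[-1,1] \cap \mathbb{T} = \{-1,1\}$. Combining the two observations yields the claim: for $\zeta \in \mathbb{T}$, the Berezin range $\text{Ber}(C_\phi)$ is convex if and only if $\zeta = -1$ or $\zeta = 1$.

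There is no genuine analytic obstacle here, as the entire content is inherited from Theorem \ref{elliptic}; the corollary reduces to the elementary observation that the convex locus $[-1,1]$ meets the unit circle only at $\pm 1$. The one point I would verify for completeness is that the characterization is non-vacuous at both endpoints: both $\zeta = 1$ and $\zeta = -1$ satisfy $|\zeta| = 1$ with $a = 0$, so by Theorem \ref{carswell} each produces a genuinely bounded operator on $F^2(\mathbb{C})$, confirming that the two admissible values indeed correspond to honest members of the family under consideration.
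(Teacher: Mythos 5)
Your proposal is correct and matches the paper's reasoning exactly: the paper presents this corollary as an immediate specialization of Theorem \ref{elliptic}, using Theorem \ref{carswell} to justify that $\phi(z)=\zeta z$ is the only bounded case when $\zeta\in\mathbb{T}$, and then intersecting the convexity locus $[-1,1]$ with the unit circle to obtain $\{-1,1\}$. Nothing is missing.
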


Now consider the automorphisms of the unit disc $\phi(z) = \frac{az+b}{\overline{b}z+\bar{a}}$ where $a,b\in \mathbb{C}$ and $|a|^2 - |b|^2=1$. For the composition operator $C_{\phi} $
acting on $F^2(\mathbb{C})$, we have
$$\widetilde{C_{\phi}}(z)=\frac{1}{e^{|z|^2}} k_{z}(\phi(z)).$$

 The Berezin range of these operators is not always convex, as shown in Figure \ref{fig2}. Here, we try to find the values of $\alpha$ for which $B(C_\phi)$ is convex.

  \begin{figure}[h]
    	\caption{$B(C_\phi)$ on $F^2(\mathbb{C})$ for $a=e^{i\frac{\pi}{12}}$ and $b=0$(apparently not convex).}
  	\includegraphics[scale=0.5]{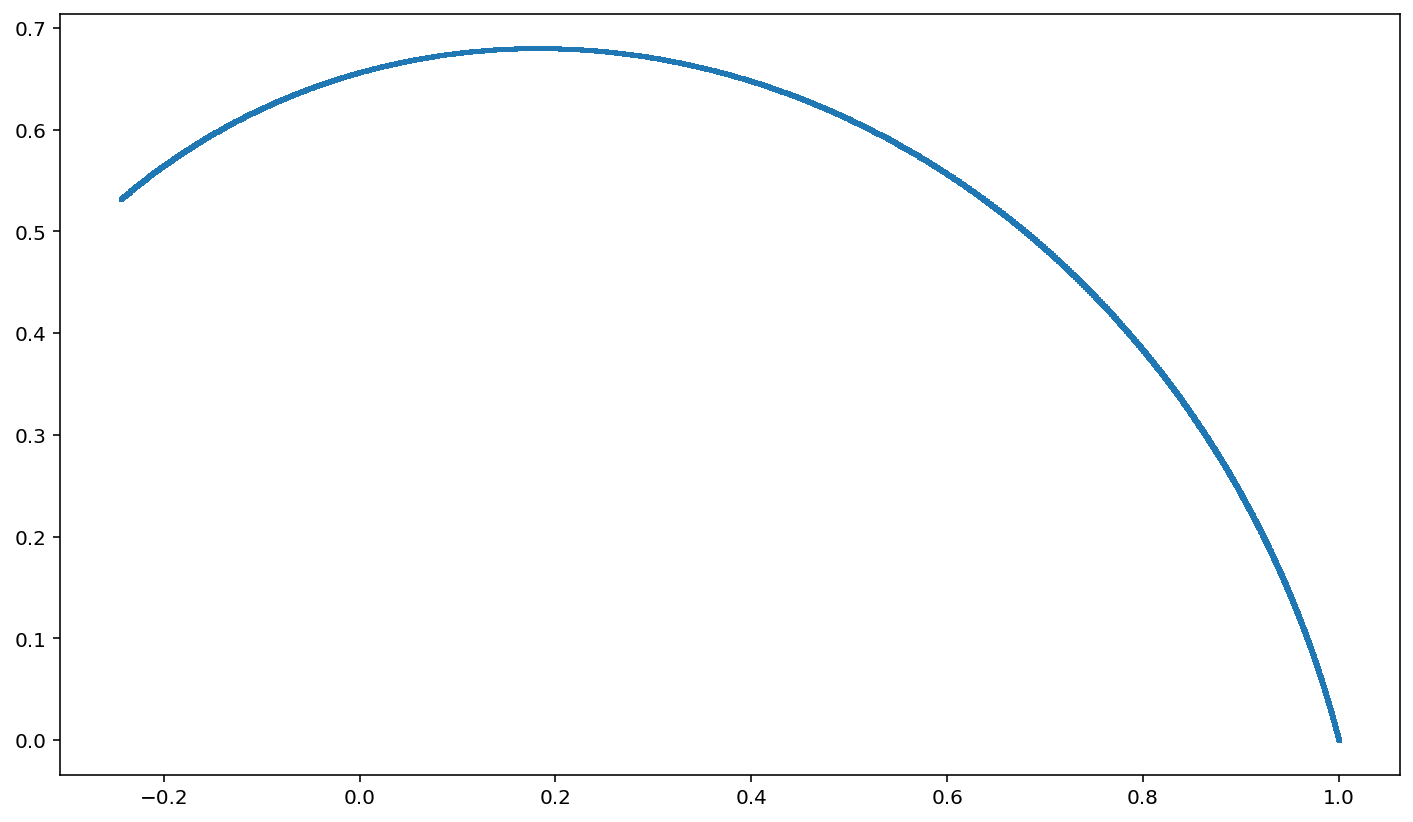}
  	\label{fig2}
  	 \end{figure}

The following remark is a consequence of Corollary \ref{fock}.
  \begin{remark}
    	For $b=0$, $\text{Ber}(C_\phi)$ is convex if and only if $a=1,-1,i,-i$.
  \end{remark}
Now for the case $\phi(z) = \zeta z + a$ with $|\zeta| < 1$ and $a\neq 0$, the Berezin range of the corresponding
composition operator is not always convex, as shown in Figure \ref{fig3}.
\begin{figure}[h]
    	\caption{$\text{Ber}(C_\phi)$ on $F^2(\mathbb{C})$ for $\zeta=0.5$ and $a=1$(left, apparently not convex) and for $\zeta=0.5$ and $a=10$(right, apparently convex).}
  	\includegraphics[scale=0.25]{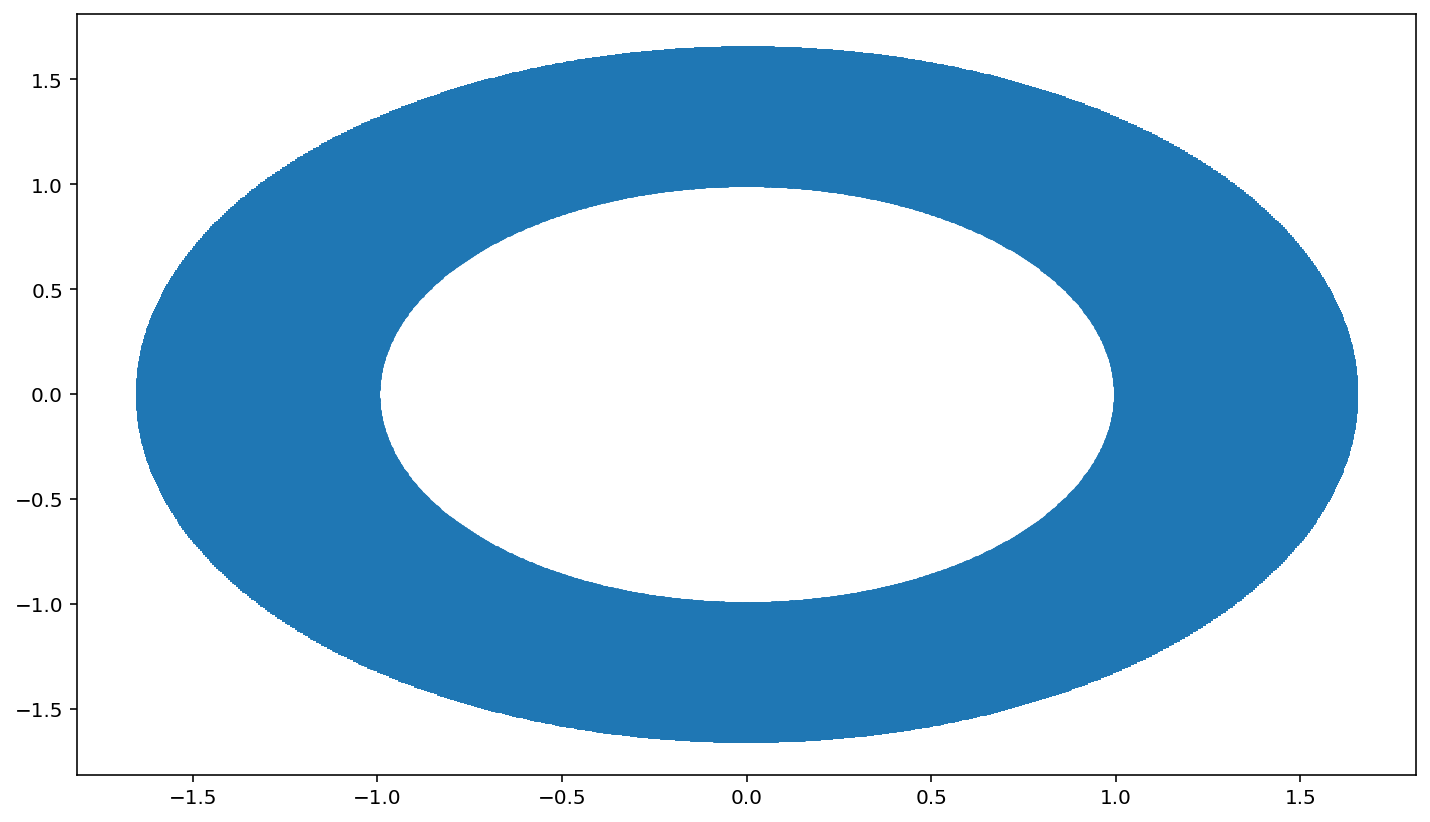}
  	\includegraphics[scale=0.25]{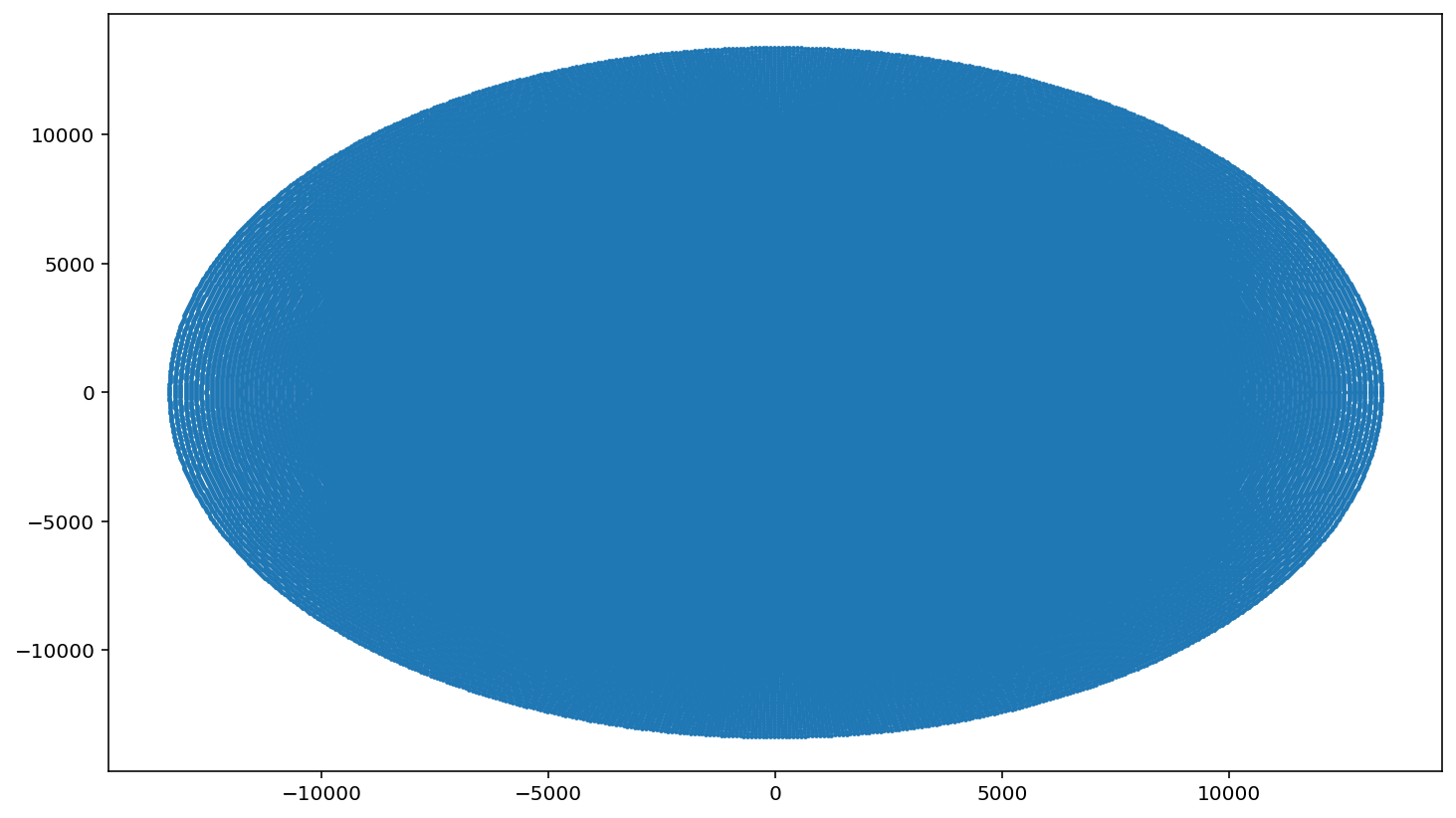}
  	\label{fig3}
  	 \end{figure}
 
\begin{theorem*}
 Let $\phi(z) = \zeta z + a$ with $|\zeta| < 1$ and $a\neq 0$, for which values of $\zeta$ and $a$, $\text{Ber}(C_\phi)$ is convex?
\end{theorem*}

\section{Composition Operator on Dirichlet space}	
Let $\mathbb{D}$ denote the open unit disc, and Hol($\mathbb{D}$) be the set of all holomorphic functions in $\mathbb{D}$. Let $\text{Aut}(\mathbb{D})$ be the set of all automorphisms on $\mathbb{D}$. Let $dA$ denote the area Lebesgue measure on the complex plane $\mathbb{C}$. Given a function $f\in$ Hol($\mathbb{D}$), the \textit{Dirichlet integral} of $f$ is defined by
$$ \mathcal{D}(f):= \frac{1}{\pi} \int_{\mathbb{D}} |f^{'}(z)|^{2} dA(z).$$
Then we define \textit{Dirichlet space} $\mathcal{D}$ \cite{el2014primer} to be the vector space of all functions $f \in$ Hol$(\mathbb{D}$) such that $ \mathcal{D}(f) < \infty.$ The Dirichlet space is a subset of the Hardy space, and $\mathcal{D}$ is dense in the Hardy space $H^2(\mathbb{D})$ as it contains the polynomials. The Dirichlet space $\mathcal{D}$ is a reproducing kernel Hilbert space with a norm induced by the inner product
$$\langle f,f \rangle_{\mathcal{D}}:= \langle f,f\rangle_{H^2(\mathbb{D})} +\mathcal{D}(f),$$
where $\langle .,.\rangle_{H^2(\mathbb{D})}$ is the usual inner product in ${H^2(\mathbb{D})}$ and the reproducing kernel is defined as
$$ k_w(z) = \frac{1}{\bar{w}z}\log\left(\frac{1}{1-\bar{w}z}\right) $$
for $w\neq 0$ and $k_0 \equiv 1$. Observe that for $z\neq 0$, we have
\begin{center}
  	\begin{equation*}
  		\begin{split}
  			k_z(z) &=\langle k_{z},k_{z}\rangle = ||k_z||^2\\
  			&=\frac{1}{|z|^2}\log\left(\frac{1}{1-|z|^2}\right).
  		\end{split}
  	\end{equation*}
  \end{center}
Let $\phi$ be a complex-valued function $\phi : \mathbb{D}\longrightarrow \mathbb{D}$. A composition operator $C_\phi$ acting on the Dirichlet space $\mathcal{D}$, will have the Berezin transform
\begin{center}
  	\begin{equation*}
  		\begin{split}
  			\widetilde{C_{\phi}}(z) &=\langle C_{\phi}\hat{k}_{z},\hat{k}_{z}\rangle\\
  			&=\frac{1}{||k_z||^2}\langle C_{\phi}{k_{z}},{k_{z}}\rangle\\
  			&=\frac{1}{||k_z||^2} k_{z}(\phi(z)).
  		\end{split}
  	\end{equation*}
  \end{center}
  
 \begin{theorem}[\cite{el2014primer}, Section 1.4]
 If $f \in \text{Hol}(\mathbb{D})$ and $\phi \in \text{Aut}(\mathbb{D})$, then $\mathcal{D}(f \circ \phi) = \mathcal{D}(f)$. Consequently, if $f \in \mathcal{D}$, then also $C_\phi f=f \circ \phi \in \mathcal{D}$.
 \end{theorem}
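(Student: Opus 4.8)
The plan is to establish the conformal invariance of the Dirichlet integral directly from the change-of-variables formula for area integrals, exploiting that an automorphism of $\mathbb{D}$ is a holomorphic bijection of $\mathbb{D}$ onto itself. The statement about $C_\phi f \in \mathcal{D}$ is then an immediate bookkeeping consequence.

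First I would apply the chain rule to the composition, $(f\circ\phi)'(z) = f'(\phi(z))\,\phi'(z)$, so that
$$\mathcal{D}(f\circ\phi) = \frac{1}{\pi}\int_{\mathbb{D}} |f'(\phi(z))|^2\,|\phi'(z)|^2\,dA(z).$$
Next I would recall that for a holomorphic map $\phi$, regarded as a map $\mathbb{R}^2\to\mathbb{R}^2$, the real Jacobian determinant equals $|\phi'(z)|^2$; writing $\phi=u+iv$ this is a one-line computation from the Cauchy--Riemann equations, since the Jacobian $u_xv_y-u_yv_x$ collapses to $u_x^2+v_x^2=|\phi'|^2$. Because $\phi\in\mathrm{Aut}(\mathbb{D})$ is a smooth bijection of $\mathbb{D}$ onto itself and the integrand is non-negative, the substitution $w=\phi(z)$ is fully justified and yields
$$\mathcal{D}(f\circ\phi) = \frac{1}{\pi}\int_{\mathbb{D}} |f'(w)|^2\,dA(w) = \mathcal{D}(f).$$

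For the consequence, if $f\in\mathcal{D}$ then $\mathcal{D}(f)<\infty$; since $\phi$ maps $\mathbb{D}$ into $\mathbb{D}$ the composition $f\circ\phi$ is again holomorphic on $\mathbb{D}$, and the identity just proved gives $\mathcal{D}(f\circ\phi)=\mathcal{D}(f)<\infty$. By the definition of $\mathcal{D}$ as the holomorphic functions with finite Dirichlet integral, this places $f\circ\phi\in\mathcal{D}$, so that $C_\phi$ indeed maps $\mathcal{D}$ into itself.

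The only point requiring care---more a matter of rigor than a genuine obstacle---is the legitimacy of the change of variables: one must confirm that a disc automorphism is a $C^1$ diffeomorphism of $\mathbb{D}$ onto itself, which is immediate from the explicit M\"obius form $\phi(z)=\lambda\,(a-z)/(1-\bar a z)$ with $|\lambda|=1$ and $|a|<1$, and that its Jacobian is $|\phi'|^2$. A coordinate-free alternative would be to read $\pi\,\mathcal{D}(f)$ as the area of $f(\mathbb{D})$ counted with multiplicity; since $\phi$ is a bijection of $\mathbb{D}$, the maps $f$ and $f\circ\phi$ have the same image with matching covering multiplicities, so the two areas coincide and the invariance follows without any explicit integration.
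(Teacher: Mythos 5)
Your proof is correct, and it coincides with the standard argument behind this statement: the paper gives no proof of its own, quoting the result directly from \cite{el2014primer}, Section 1.4, where it is established by exactly your change-of-variables computation (chain rule, real Jacobian equal to $|\phi'|^2$ via the Cauchy--Riemann equations, substitution $w=\phi(z)$ justified by the M\"obius form of a disc automorphism). Your deduction that $C_\phi$ maps $\mathcal{D}$ into itself also matches the paper's definition of $\mathcal{D}$ as the holomorphic functions on $\mathbb{D}$ with finite Dirichlet integral, so nothing further is needed.
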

 Thus, if $\phi \in \text{Aut}(\mathbb{D})$, then the composition operator is bounded. Any automorphism
on the unit disc $\mathbb{D}$ is of the form
$$\phi (z) = e^{i\theta} \frac{\alpha-z}{1- \bar{\alpha}z}$$
where $\alpha \in \mathbb{D}$. Now we consider the automorphisms, elliptic symbol, and Blaschke factor on the unit disc $\mathbb{D}$ and characterize the convexity of the Berezin range of their corresponding composition operators on the Dirichlet space $\mathcal{D}$.

\subsection{Elliptic symbol}
For $z\in \mathbb{D}$ and $\zeta\in \mathbb{T}$, we define a self-map on the unit disc, $ \phi(z) = \zeta z$. Consider the composition operator $ C_{\phi}f$ acting on the reproducing kernel Hilbert space $\mathcal{D}$. The Berezin transform of $C_\phi$ can be computed as 
\begin{center}
  	\begin{equation*}
  		\begin{split}
  			\widetilde{C_{\phi}}(z) &=\langle C_{\phi}\hat{k}_{z},\hat{k}_{z}\rangle\\  			
  			&=\frac{1}{||k_z||^2} k_{z}(\phi(z)).
  		\end{split}
  	\end{equation*}
  \end{center}
  For $z\neq 0$, we have
  $$||k_z||^2 =\frac{1}{|z|^2} \log\left(\frac{1}{1-|z|^2}\right)$$
  		
 and
 $$k_{z}(\phi(z))=\frac{1}{\bar{z}\phi(z)} \log\left(\frac{1}{1-\bar{z}\phi(z)}\right).$$
 Therefore, when $\phi(z) = \zeta z$, $\zeta
 \in \mathbb{T}$ and $z \neq 0$, the Berezin transform of $C_{\phi}$ is
 \begin{center}
  	\begin{equation*}
  		\begin{split}
  			\widetilde{C_{\phi}}(z) &=\frac{1}{||k_z||^2} k_{z}(\phi(z))\\  			
  			&=\frac{1}{\frac{1}{|z|^2} \log\left(\frac{1}{1-|z|^2}\right)} \frac{1}{\zeta|z|^2} \log\left(\frac{1}{1-\zeta|z|^2}\right)\\
  			&=\frac{1}{\zeta}\frac{\log(1-\zeta|z|^2)}{\log(1-|z|^2)}\\
  			&=\frac{\log(1-\zeta|z|^2)}{\zeta \log(1-|z|^2)}
  		\end{split}
  	\end{equation*}
  \end{center}
  and for $z=0$, $\widetilde{C_{\phi}}(0) = 1.$

 The Berezin range of these composition operators may not always be convex, as shown in Figure \ref{fig4}. The following theorem gives the values of $\zeta$ for which $B(C_\phi)$ is convex.

  \begin{figure}[h]
    	\caption{$B(C_\phi)$ on $\mathcal{D}$ for $\zeta=-i$ (apparently not convex).}
  	\includegraphics[scale=0.5]{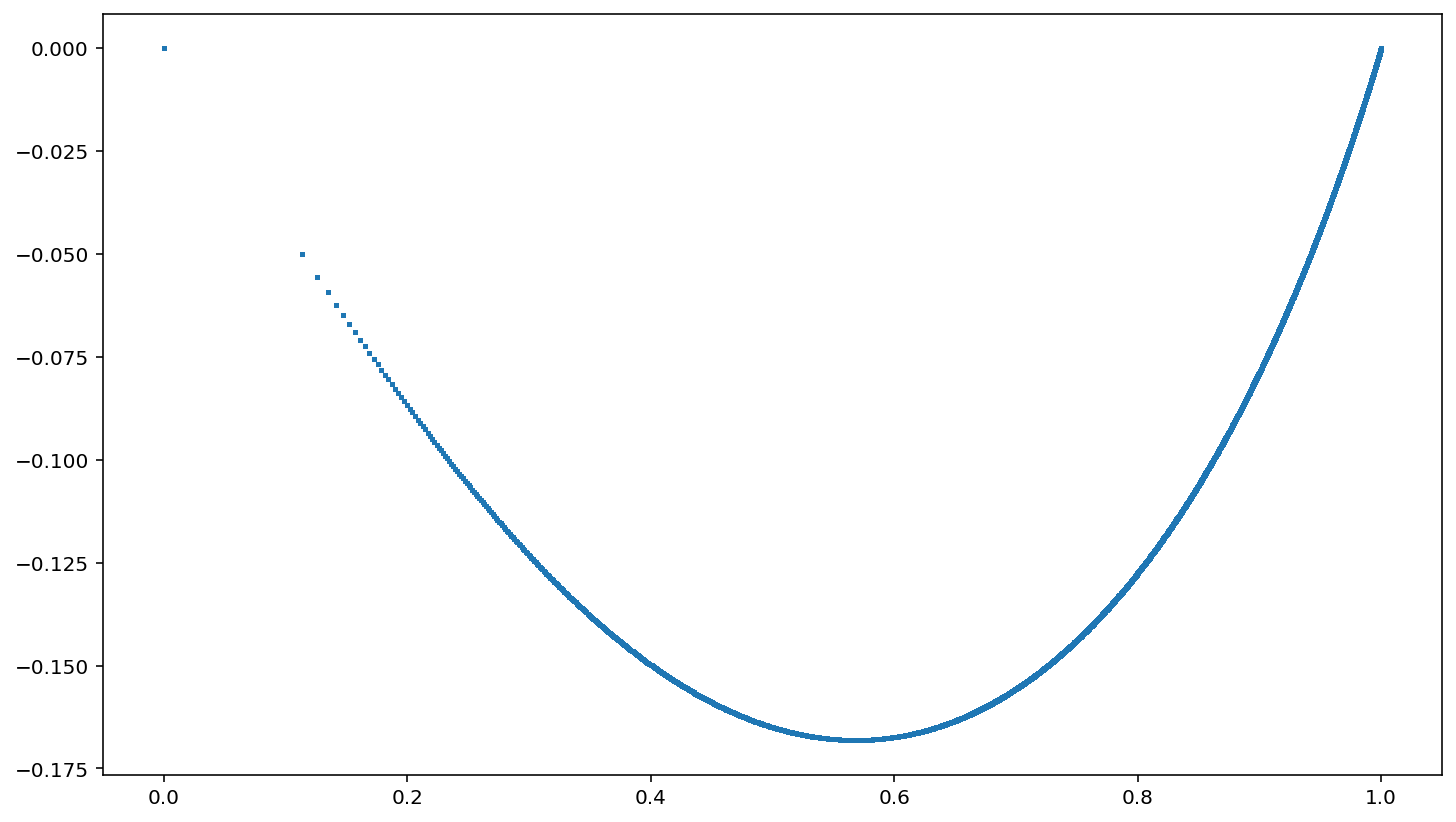}
  	\label{fig4}    
  	 \end{figure}

\begin{theorem}\label{ellipticdiri}                                                                                                                                                                                                                                                                                                                                                                                                                                                                                                                                                                                                                                                                                                                                                                                                                                                                                                                                                                                                                                                                                                                                                                                                                                                                                                                                                                                                                                                                                                                                                                                                                                                                                                                                                                                                                                                                                                                                                                                                                                                                                                                                                                                                                                                                                                                                                                                                                                                                                                                                                                     
Let $z\in \mathbb{D}$,  $\zeta \in \mathbb{T}$ and $\phi(z)=\zeta z$. Then the Berezin range of $C_\phi$ acting on the Dirichlet space $\mathcal{D}$ is convex if and only if $\zeta \in \{-1,1\}$.
\end{theorem}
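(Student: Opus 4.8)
The plan is to follow the philosophy of the proof of Theorem~\ref{elliptic}: realize the Berezin range as the image of a single real-analytic curve and use the fact that a one-dimensional curve in $\mathbb{C}$ can be convex only if it degenerates to a point or a line segment. The first observation is that the computed Berezin transform depends on $z$ only through $t := |z|^2 \in [0,1)$, so that
$$\text{Ber}(C_\phi) = \{\,\gamma(t) : t \in [0,1)\,\}, \qquad \gamma(t) = \frac{\log(1-\zeta t)}{\zeta\,\log(1-t)},$$
with the removable value $\gamma(0)=1$. Since $\gamma$ is real-analytic (and non-constant when $\zeta\neq 1$), its image has two-dimensional Lebesgue measure zero, hence empty interior; as a convex subset of $\mathbb{R}^2$ with empty interior lies in a line, convexity of $\text{Ber}(C_\phi)$ forces $\gamma$ to trace out a point or a segment.

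For the backward implication I would treat the two values separately. When $\zeta=1$ the formula collapses to $\gamma\equiv 1$, so $\text{Ber}(C_\phi)=\{1\}$ is a singleton. When $\zeta=-1$ one gets $\gamma(t)=\log(1+t)/(-\log(1-t))$, which is real for every $t\in[0,1)$; being the continuous image of an interval, its range is a connected subset of $\mathbb{R}$, hence an interval, hence convex.

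The heart of the matter is the forward implication, for which I would argue the contrapositive. Since $\zeta\in\mathbb{T}$, the assumption $\zeta\notin\{-1,1\}$ is equivalent to $\operatorname{Im}\zeta\neq 0$. Expanding $\gamma$ about $t=0$ via $\log(1-w)=-\sum_{n\ge1}w^n/n$ and dividing the resulting series gives
$$\gamma(t) = 1 + \frac{\zeta-1}{2}\,t + \frac{(\zeta-1)(4\zeta+1)}{12}\,t^2 + O(t^3).$$
The key computational point, and the step I expect to be the main obstacle, is recognizing the factorization $4\zeta^2-3\zeta-1=(\zeta-1)(4\zeta+1)$ of the quadratic coefficient, which makes the ratio of the second-order to the first-order coefficient collapse cleanly:
$$\frac{(\zeta-1)(4\zeta+1)/12}{(\zeta-1)/2} = \frac{4\zeta+1}{6}.$$
Because $\zeta\neq 1$, the linear coefficient $(\zeta-1)/2$ is nonzero, so if $\gamma$ were a segment its image would lie on the line through $1$ in that direction, forcing $(4\zeta+1)/6$ to be real. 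But $\operatorname{Im}\!\big((4\zeta+1)/6\big)=\tfrac{2}{3}\operatorname{Im}\zeta\neq 0$, a contradiction. Hence $\gamma$ is neither a point nor a segment, so $\text{Ber}(C_\phi)$ is not convex, which establishes the equivalence.

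An alternative staying closer to the bookkeeping of Theorem~\ref{elliptic} would be to impose collinearity on the three points $\gamma(0),\gamma(t_1),\gamma(t_2)$ and seek a contradiction by a judicious choice of the parameters. I prefer the Taylor approach because it localizes the obstruction at $t=0$ and isolates exactly the coefficient that detects $\operatorname{Im}\zeta$; the only delicate book-keeping is the series division, after which the factorization does all the work.
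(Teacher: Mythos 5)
Your proof is correct. I checked the computational core: expanding $\log(1-w)=-\sum_{n\ge1}w^n/n$ and dividing the series does give
$$\gamma(t)=1+\frac{\zeta-1}{2}\,t+\frac{4\zeta^2-3\zeta-1}{12}\,t^2+O(t^3),$$
the factorization $4\zeta^2-3\zeta-1=(\zeta-1)(4\zeta+1)$ holds, and $\operatorname{Im}\bigl((4\zeta+1)/6\bigr)=\tfrac{2}{3}\operatorname{Im}\zeta$.

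Your argument shares the paper's skeleton (the backward direction is the same, and both forward directions reduce convexity of a curve-image to the point-or-segment dichotomy), but the decisive step is genuinely different. The paper pins down the putative segment \emph{globally}: it uses $\widetilde{C_{\phi}}(0)=1$ together with $\lim_{r\to 1^-}\widetilde{C_{\phi}}(re^{i\theta})=0$ (valid when $\zeta\neq 1$, since $\log(1-\zeta r^2)$ stays bounded while $\log(1-r^2)\to-\infty$) to conclude the segment lies on the real axis, and then \emph{asserts without proof} that the Berezin transform is real-valued only if $\operatorname{Im}\zeta=0$. You instead localize the obstruction at $t=0$: the segment's direction must be $\gamma'(0)=(\zeta-1)/2\neq 0$, and realness of $h(t)=(\gamma(t)-1)\big/\tfrac{\zeta-1}{2}$ forces its quadratic Taylor coefficient $(4\zeta+1)/6$ to be real, which fails. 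Your route buys two things. First, it closes the gap the paper leaves open: the paper's asserted equivalence is precisely the statement that requires a Taylor-type argument, and you supply one. Second, your measure-zero/empty-interior justification of ``convex image of a regular curve implies point or segment'' is sounder than the paper's bare claim that a convex path must be a point or a line segment, which is false for arbitrary continuous paths (space-filling curves) and needs exactly the regularity you invoke. What the paper's route buys is economy: once the values $1$ and $0$ pin the line to $\mathbb{R}$, only the first-order coefficient is needed, since then $\gamma'(0)=(\zeta-1)/2$ must itself be real, giving $\operatorname{Im}\zeta=0$ immediately. So if you wanted to shorten your argument, you could combine the paper's limit observation with your linear term and never compute the quadratic coefficient at all.
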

\begin{proof}
First, let us prove the backward implication. For $\zeta =1$, we have $\widetilde{C_{\phi}}(0) = 1$ and 
 $$ \widetilde{C_{\phi}}(z) = \frac{\log(1-\zeta|z|^2)}{\zeta \log(1-|z|^2)} = 1$$
 for $z\neq 0$. So $\text{Ber}(C_\phi)= \{1\},$ which is a singleton set and is therefore convex. 
 
 Now for $\zeta = -1$, we have $\widetilde{C_{\phi}}(0) = 1$ and 
 $$ \widetilde{C_{\phi}}(z) = \frac{\log(1-\zeta|z|^2)}{\zeta \log(1-|z|^2)} = - \frac{\log(1+|z|^2)}{ \log(1-|z|^2)}.$$
 Put $z= re^{i\theta}$. Then for $0<r<1$, we have
 $$ \widetilde{C_{\phi}}(re^{i\theta}) = - \frac{\log(1+r^2)}{ \log(1-r  ^2)}.$$
 Therefore we obtain
 $$ \text{Ber}(C_\phi) = \left\lbrace - \frac{\log(1+r^2)}{ \log(1-r  ^2)} : r \in (0,1) \right\rbrace \cup \{1\} = (0,1],$$
 which is also a convex set in $\mathbb{C}$.
 
  Conversely, assume that the Berezin range of $C_\phi$ acting on the Dirichlet space $\mathcal{D}$ is convex.  Let $z\in \mathbb{D}$,  $\zeta \in \mathbb{T}$ and $\phi(z)=\zeta z$. For $z\neq 0$, we have the Berezin transform
 $$ \widetilde{C_{\phi}}(z) = \frac{\log(1-\zeta|z|^2)}{\zeta \log(1-|z|^2)}$$
and for $z=0$, $\widetilde{C_{\phi}}(0) = 1.$ Put $z= re^{i\theta}$.  Then we get  
 $$ \widetilde{C_{\phi}}(re^{i\theta}) = \frac{\log(1-\zeta r^2)}{\zeta \log(1-r^2)},$$
 which is a function that is independent of $\theta$. Therefore,  $\text{Ber}(C_\phi)$ is a just path in the complex plane. Since we have assumed convexity of  $\text{Ber}(C_\phi)$, a convex path in $\mathbb{C}$ must be either a point or a line segment. It is easy to observe that $\text{Ber}(C_\phi)$ is a point if and only if $\zeta = 1$. So assume that $\text{Ber}(C_\phi)$ is a line segment. Since $\widetilde{C_{\phi}}(0) = 1$ and $\lim_{r\rightarrow 1^-}\widetilde{C_{\phi}}(re^{i\theta}) = 0$, $\text{Ber}(C_\phi)$ is a line segment passing through the point 1 and approaching zero. Therefore, the line segment is contained in the real line, and the imaginary part of the Berezin transform is zero. The imaginary part of $\text{Ber}(C_\phi)$ is zero if and only if the imaginary part of $\zeta $ is zero. Thus, as $\zeta \in \mathbb{T}$, we have $\zeta = 1$ or $\zeta= -1.$
\end{proof}

\subsection{Blaschke Factor}
Consider the automorphism on the unit disc known as the Blaschke factor
$$\phi_\alpha(z) = \frac{z-\alpha}{1-\overline{\alpha}z}
$$
where $\alpha\in \mathbb{D}$	  and  consider the composition operator $C_{\phi_\alpha}f = f\circ \phi_\alpha.$ Berezin range of these composition operators may not always be convex, as shown in Figure \ref{fig5}. We try to find the values of $\alpha$ for which $\text{Ber}(C_{\phi_\alpha})$ is convex.

  \begin{figure}[h]
    	\caption{$\text{Ber}(C_{\phi_\alpha})$ on $\mathcal{D}$ for $\alpha=\frac{1}{2}e^{(i\frac{\pi}{3})}$ (apparently not convex).}
  	\includegraphics[scale=0.5]{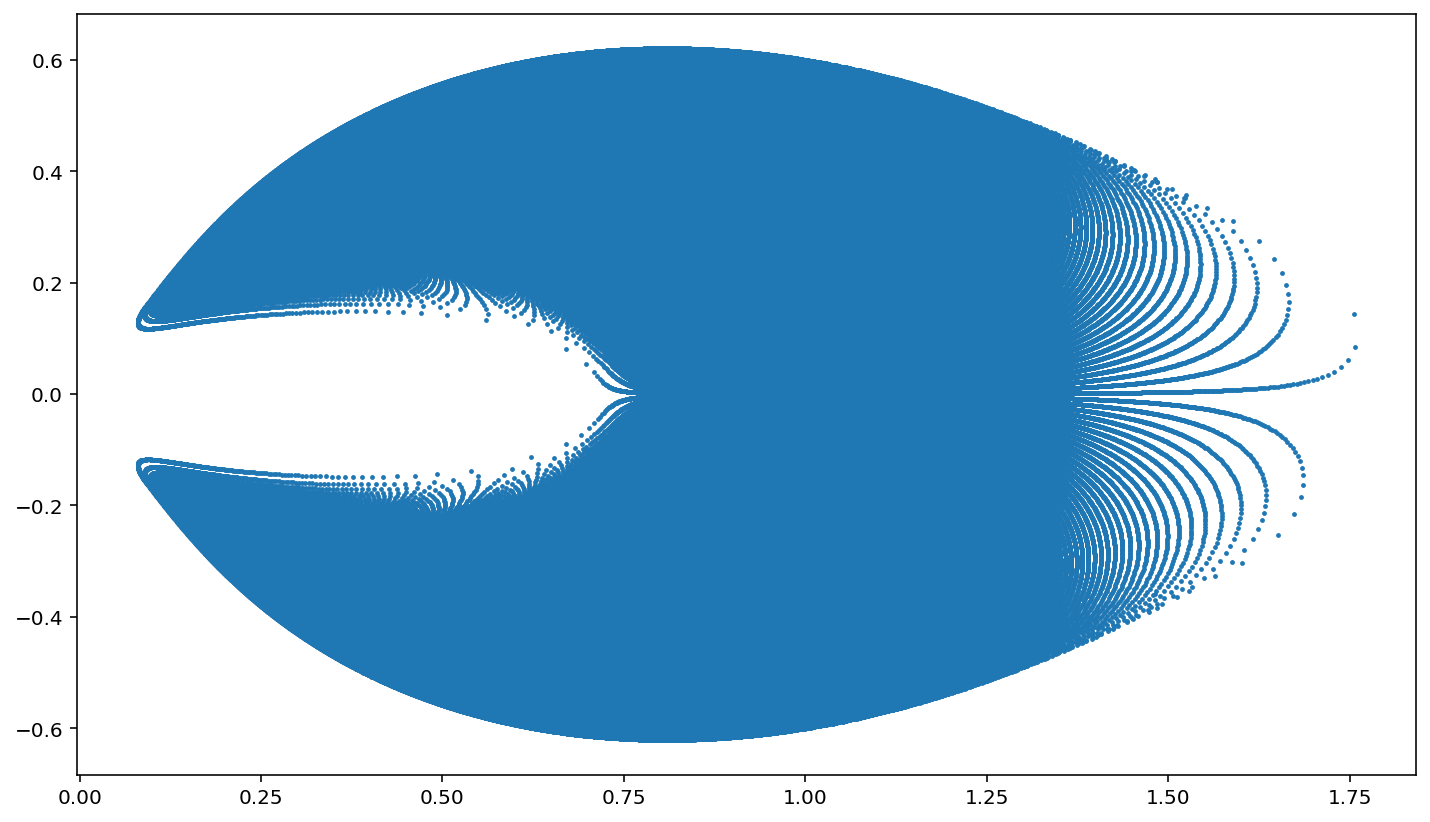}
  	\label{fig5}
  	 \end{figure}
  	 We have the Berezin transform
  	 \begin{center}
  	\begin{equation*}
  		\begin{split}
  			\widetilde{C}_{\phi_\alpha}(z) &=\frac{1}{||k_z||^2} k_{z}(\phi(z))\\
  			&=\frac{1}{\frac{1}{|z|^2}\log\left(\frac{1}{1-|z|^2}\right)}\frac{1}{\bar{z}\frac{z-\alpha}{1-\overline{\alpha}z}}\log\left(\frac{1}{1 - \bar{z}\frac{z-\alpha}{1-\overline{\alpha}z}}\right).
  		\end{split}
  	\end{equation*}
  \end{center}
We now compute each separately. Since $Re\{\bar{z}\alpha\} = Re\{\bar{\alpha}z\}$ and $Im\{\bar{z}\alpha\} = -Im\{\bar{\alpha}z\}$, we have  
	\begin{center}
  	\begin{equation*}
  		\begin{split}
  			\frac{1}{\bar{z}\frac{z-\alpha}{1-\overline{\alpha}z}} &=\frac{1-\bar{\alpha}z}{|z|^2-\bar{z}\alpha}\\
  			&=\frac{1-\bar{\alpha}z}{|z|^2-Re\{\bar{z}\alpha\} - iIm\{\bar{z}\alpha\}}\\
  			&=\frac{(1-\bar{\alpha}z)(|z|^2-Re\{\bar{z}\alpha\} + iIm\{\bar{z}\alpha\})}{|~~|z|^2-Re\{\bar{z}\alpha\} - iIm\{\bar{z}\alpha\}|^2}\\
  			&=a_{\alpha,z}(|z|^2-Re\{\bar{z}\alpha\} + iIm\{\bar{z}\alpha\} - (Re\{\bar{\alpha}z\} + iIm\{\bar{\alpha}z\})(|z|^2-Re\{\bar{z}\alpha\})\\
  			&\qquad -i(Re\{\bar{\alpha}z\} + iIm\{\bar{\alpha}z\})Im\{\bar{z}\alpha\})\\
  			&=a_{\alpha,z}((|z|^2-Re\{\bar{\alpha}z\})(1 - Re\{\bar{\alpha}z\})- (Im\{\bar{\alpha}z\})^2\\
  			&\qquad +iIm\{\bar{\alpha}z\}(2Re\{\bar{\alpha}z\}- |z|^2-1))
  		\end{split}
  	\end{equation*}
  \end{center}
  where
  $$a_{\alpha,z}= \frac{1}{|~~|z|^2-Re\{\bar{z}\alpha\} - iIm\{\bar{z}\alpha\}|^2}.$$
Similarly
\begin{center}
  	\begin{equation*}
  		\begin{split}
 			 \frac{1}{1-\bar{z}\frac{z-\alpha}{1-\overline{\alpha}z}} &= \frac{(1 - \alpha \overline{z})}{1 - \overline{\alpha} z -|z|^2 + \overline{z}\alpha}\\
 			 &= \frac{(1 - \overline{\alpha} z)}{1-|z|^2 + 2iIm\{\overline{z}\alpha\}}\\
 			 &=b_{\alpha, z}(1 - \alpha \overline{z})(1 - |z|^2 - 2i \mathrm{Im}\{\bar{z}\alpha\})\\
 			 &=b_{\alpha, z}(1 - |z|^2 + 2i \mathrm{Im}\{\overline{\alpha} z\} - \overline{\alpha} z(1 - |z|^2) + 2i \mathrm{Im}\{\alpha \overline{z}\}\overline{\alpha} z)\\
 			 &= b_{\alpha, z}[1 - |z|^2 + 2i \mathrm{Im}\{\overline{\alpha} z\} - (Re\{\overline{\alpha} z\} + i \mathrm{Im}\{\overline{\alpha} z\})(1 - |z|^2)\\
 			 &\qquad - 2i \mathrm{Im}\{\alpha \overline{z}\}(Re\{\overline{\alpha} z\} + i \mathrm{Im}\{\overline{\alpha} z\})]\\
 			 &=b_{\alpha, z}[1 - |z|^2 + 2i \mathrm{Im}\{\overline{\alpha} z\} - (1 - |z|^2)Re\{\overline{\alpha} z\} - i(1 - |z|^2)\mathrm{Im}\{\overline{\alpha} z\} \\
 			 &\qquad- 2i \mathrm{Im}\{\alpha \overline{z}\}Re\{\overline{\alpha} z\} + 2(\mathrm{Im}\{\overline{\alpha} z\})^2]\\
 			 &= b_{\alpha, z}[1 - |z|^2 - (1 - |z|^2)Re\{\overline{\alpha} z\} + 2(\mathrm{Im}\{\overline{\alpha} z\})^2] \\
 			 &\qquad  + ib_{\alpha, z}[2\mathrm{Im}\{\overline{\alpha} z\} - (1 - |z|^2)\mathrm{Im}\{\overline{\alpha} z\} - 2\mathrm{Im}\{\alpha \overline{z}\}Re\{\overline{\alpha} z\}]\\
 			 &= b_{\alpha, z}[(1 - |z|^2)(1 - Re\{\overline{\alpha} z\}) + 2(\mathrm{Im}\{\overline{\alpha} z\})^2] \\
 			 &\qquad + ib_{\alpha, z}\mathrm{Im}\{\alpha \overline{z}\}[1 + |z|^2 - 2Re\{\overline{\alpha} z\}]
  		\end{split}
  	\end{equation*}
  \end{center}
where
$$ b_{\alpha, z} = \frac{1}{\left|1 - |z|^2 + 2i\mathrm{Im}\{\bar{z}\alpha\}\right|^2}.$$ 
So define
$$c_{\alpha, z} = \frac{a_{\alpha, z} }{\frac{1}{|z|^2} \log\left(\frac{1}{1-|z|^2}\right)}.$$
Then, the Berezin transform
\begin{center}
  	\begin{equation}\label{3.1}
  		\begin{split}
  			\widetilde{C}_{\phi_\alpha}(z) &= c_{\alpha, z}[(|z|^2 - \mathrm{Re}\{\overline{\alpha} z\})(1 - \mathrm{Re}\{\overline{\alpha} z\}) - (\mathrm{Im}\{\overline{\alpha} z\})^2\\
  			&\qquad + i\mathrm{Im}\{\overline{\alpha} z\}(2\mathrm{Re}\{\overline{\alpha} z\} - |z|^2 - 1)] \times\\
  			&\qquad \log[b_{\alpha, z}[(1 - |z|^2)(1 - \mathrm{Re}\{\overline{\alpha} z\}) + 2(\mathrm{Im}\{\overline{\alpha} z\})^2\\
  			&\qquad + i\mathrm{Im}\{\alpha \overline{z}\}(1 + |z|^2 - 2\mathrm{Re}\{\alpha \overline{z}\})]].
  		\end{split}
  	\end{equation}
  \end{center}
\begin{proposition}\label{conj}
  	The Berezin range of $C_{\phi_\alpha}$ acting on $\mathcal{D}$ is closed under complex conjugation and therefore is symmetric about the real axis.
  	\end{proposition}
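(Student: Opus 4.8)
The plan is to exhibit, for every $z\in\mathbb{D}$, a companion point $z'\in\mathbb{D}$ whose Berezin transform is the complex conjugate of that of $z$; this immediately gives closure under conjugation and hence symmetry about the real axis. The case $\alpha=0$ is trivial, since then $\phi_0(z)=z$, $C_{\phi_0}=I$, and $\text{Ber}(C_{\phi_0})=\{1\}$. So I would assume $\alpha\neq 0$, set $\lambda=\alpha/\overline{\alpha}$, which is unimodular, and define $z'=\lambda\overline{z}$. Then $|z'|=|z|$, so $z'\in\mathbb{D}$, and I would argue directly from the identity $\widetilde{C}_{\phi_\alpha}(z)=k_z(\phi_\alpha(z))/k_z(z)$ rather than from the expanded formula \eqref{3.1}.

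First I would dispose of the denominator: $k_{z'}(z')=\frac{1}{|z'|^2}\log\frac{1}{1-|z'|^2}=\frac{1}{|z|^2}\log\frac{1}{1-|z|^2}=k_z(z)$, which is real. The heart of the matter is the single algebraic identity
$$\overline{z'}\,\phi_\alpha(z')=\overline{\overline{z}\,\phi_\alpha(z)}.$$
To verify it I would use $\overline{z'}=\overline{\lambda}z$ together with the two relations $\overline{\alpha}\lambda=\alpha$ and $\overline{\lambda}\alpha=\overline{\alpha}$, which collapse $\phi_\alpha(z')=\frac{\lambda\overline{z}-\alpha}{1-\alpha\overline{z}}$ and then $\overline{z'}\phi_\alpha(z')=\frac{|z|^2-z\overline{\alpha}}{1-\alpha\overline{z}}$; comparing this with $\overline{z}\phi_\alpha(z)=\frac{|z|^2-\overline{z}\alpha}{1-\overline{\alpha}z}$ and conjugating yields exactly the claimed equality.

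With $w:=\overline{z}\phi_\alpha(z)$ so that $\overline{z'}\phi_\alpha(z')=\overline{w}$, the explicit form of the kernel then supplies
$$k_{z'}(\phi_\alpha(z'))=\frac{1}{\overline{w}}\log\frac{1}{1-\overline{w}}=\overline{\frac{1}{w}\log\frac{1}{1-w}}=\overline{k_z(\phi_\alpha(z))},$$
and dividing by the common real denominator $k_z(z)$ gives $\widetilde{C}_{\phi_\alpha}(z')=\overline{\widetilde{C}_{\phi_\alpha}(z)}$, which finishes the proof.

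The step I expect to require the most care is this last kernel computation, specifically the branch issue hidden in $\overline{\log\frac{1}{1-w}}=\log\frac{1}{1-\overline{w}}$: it holds for the principal logarithm precisely because $|w|=|\overline{z}\phi_\alpha(z)|<1$ forces $\mathrm{Re}(1-w)>0$, so $1-w$ stays in the open right half-plane and never meets the branch cut. As an independent check one could instead read the result off \eqref{3.1}, noting that the factors $c_{\alpha,z}$ and $b_{\alpha,z}$ depend only on $|z|^2$, $\mathrm{Re}\{\overline{\alpha}z\}$ and $(\mathrm{Im}\{\overline{\alpha}z\})^2$, while the prefactor and the argument of the logarithm have real parts even and imaginary parts odd in $\mathrm{Im}\{\overline{\alpha}z\}$; since $\overline{\alpha}z'=\alpha\overline{z}=\overline{\overline{\alpha}z}$, the passage from $z$ to $z'$ fixes $|z|^2$ and $\mathrm{Re}\{\overline{\alpha}z\}$ but flips the sign of $\mathrm{Im}\{\overline{\alpha}z\}$, thereby conjugating every factor simultaneously.
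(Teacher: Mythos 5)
Your proof is correct and follows essentially the same route as the paper: your companion point $z' = (\alpha/\overline{\alpha})\,\overline{z}$ is exactly the paper's $re^{i(2\psi-\theta)}$ (writing $\alpha = \rho e^{i\psi}$, $z = re^{i\theta}$), and both arguments hinge on the identity $\overline{z'}\,\phi_\alpha(z') = \overline{\overline{z}\,\phi_\alpha(z)}$ together with $\log\overline{w} = \overline{\log w}$ applied to the explicit kernel formula. Your write-up is marginally more careful (separating $\alpha=0$, where $\alpha/\overline{\alpha}$ is undefined, and justifying the branch of the logarithm via $|\overline{z}\phi_\alpha(z)|<1$), but the underlying argument is the same.
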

 \begin{proof}
 We show that the Berezin transform for every $z \in \mathbb{D}$ is the conjugate of the Berezin transform for some other point in the open unit disc. Put $\alpha = \rho e^{i\psi}$ and $z=re^{i\theta}$. Then, we claim that $\widetilde{C}_{\phi_\alpha}(re^{i\theta})=\overline{\widetilde{C}_{\phi_\alpha}(re^{i(2\psi-\theta)})}$. Note that 
   \begin{center}
  	\begin{equation*}
  		\begin{split}
  			||k_z||^2 &= \langle k_z,k_z\rangle\\  			
  			&=k_z(z)\\
  			&= \frac{1}{|z|^2} \log\left(\frac{1}{1-|z|^2}\right)
  		\end{split}
  	\end{equation*}
  \end{center}
  and
 $$ k_{z}(\phi_\alpha(z))=\frac{1}{\bar{z}\phi_\alpha(z)} \log\left(\frac{1}{1-\bar{z}\phi_\alpha(z)}\right)$$
  where $k_{w}(z)=\frac{1}{\bar{w}z} \log\left(\frac{1}{1-\bar{w}z}\right).$ Then we have
 \begin{center}
  	\begin{equation*}
  		\begin{split}
  			\widetilde{C}_{\phi_\alpha}(z) &=\langle C_{\phi_\alpha}\hat{k}_{z},\hat{k}_{z}\rangle\\  			
  			&=\frac{1}{||k_z||^2} k_{z}(\phi_\alpha(z))\\
  			&=\frac{1}{\frac{1}{|z|^2} \log\left(\frac{1}{1-|z|^2}\right)}\frac{1}{\bar{z}\phi_\alpha(z)} \log\left(\frac{1}{1-\bar{z}\phi_\alpha(z)}\right)
  		\end{split}
  	\end{equation*}
  \end{center}
 
 Put $z=re^{i\theta}$. Then we compute $\widetilde{C}_{\phi_\alpha}(re^{i\theta})$.
 $$\widetilde{C}_{\phi_\alpha}(re^{i\theta})   			
  			= \frac{1}{\frac{1}{r^2} \log\left(\frac{1}{1-r^2}\right)}\frac{1}{re^{-i\theta}\phi_\alpha(re^{i\theta})} \log\left(\frac{1}{1-re^{-i\theta}\phi_\alpha(re^{i\theta})}\right).$$
  			
 Similarly, put $z=re^{i(2\psi-\theta)}$. Then we compute $\overline{\widetilde{C_{\phi_\alpha}}(re^{i(2\psi-\theta)})}$.
 $$\overline{\widetilde{C_{\phi_\alpha}}(re^{i(2\psi-\theta)})} 			
  			= \frac{1}{\frac{1}{r^2} \log\left(\frac{1}{1-r^2}\right)}\frac{1}{re^{i(2\psi-\theta)}\overline{\phi_\alpha(re^{i(2\psi-\theta)})}} \log\left(\frac{1}{1-re^{i(2\psi-\theta)}\overline{\phi_\alpha(re^{i(2\psi-\theta)})}}\right).$$
  			
Also note that $\log(\bar{z}) = \overline{\log(z)}$. So our claim $\widetilde{C_{\phi_\alpha}}(re^{i\theta})=\overline{\widetilde{C_{\phi_\alpha}}(re^{i(2\psi-\theta)})}$ is true if and only if
$$\frac{1}{\phi_\alpha(re^{i\theta})} \log\left(\frac{1}{1-re^{-i\theta}\phi_\alpha(re^{i\theta})}\right) = \frac{1}{e^{i2\psi}\overline{\phi_\alpha(re^{i(2\psi-\theta)})}} \log\left(\frac{1}{1-re^{i(2\psi-\theta)}\overline{\phi_\alpha(re^{i(2\psi-\theta)})}}\right).$$
So, we compute $e^{i2\psi}\overline{\phi_\alpha(re^{i(2\psi-\theta)})}$:

\begin{center}
		\begin{equation*}
			\begin{split}
				e^{i(2\psi)}\overline{\phi_\alpha(re^{i(2\psi-\theta)})} &= e^{i(2\psi)}\frac{re^{i(\theta-2\psi)}-\rho e^{-i\psi}}{1-\rho e^{i\psi}re^{i(\theta-2\psi)}}\\
				&=\frac{re^{i\theta}-\rho e^{i\psi}}{1-\rho e^{-i\psi}re^{i\theta}}\\
				&=\phi_\alpha(re^{i\theta}).
			\end{split}
		\end{equation*}
	\end{center}
	Therefore, we get $$\frac{1}{\phi_\alpha(re^{i\theta})}= \frac{1}{e^{i(2\psi)}\overline{\phi_\alpha(re^{i(2\psi-\theta)})}}.$$
Since $re^{-i\theta}\phi_\alpha(re^{i\theta}) = re^{i(2\psi-\theta)} \overline{\phi_\alpha(re^{i(2\psi-\theta)})}$, we have
	$$ \log\left(\frac{1}{1-re^{-i\theta}\phi_\alpha(re^{i\theta})}\right) = \log\left(\frac{1}{1-re^{i(2\psi-\theta)}\overline{\phi_\alpha(re^{i(2\psi-\theta)})}}\right).$$
	Therefore $\widetilde{C_{\phi_\alpha}}(re^{i\theta})=\overline{\widetilde{C_{\phi_\alpha}}(re^{i(2\psi-\theta)})}$.
 \end{proof}
 Now, we prove an immediate consequence of the above proposition that will be important in establishing our main result of this section.
 \begin{corollary}\label{real}
	If the Berezin range of $C_{\phi_\alpha}$ acting on $\mathcal{D}$ is convex, then $Re\{\widetilde{C}_{\phi_\alpha}(z)\} \in \text{Ber}(C_{\phi_\alpha})$ for each $z\in \mathbb{D}$.
\end{corollary}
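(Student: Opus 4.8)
The plan is to exploit the symmetry established in Proposition \ref{conj} together with the convexity hypothesis, observing that the real part of any complex number is precisely the midpoint of that number and its conjugate. Fix $z \in \mathbb{D}$ and write $w = \widetilde{C}_{\phi_\alpha}(z)$, so $w \in \text{Ber}(C_{\phi_\alpha})$ by definition. The first step is to produce a second point of the Berezin range whose value is $\overline{w}$. This is exactly the content of Proposition \ref{conj}: writing $\alpha = \rho e^{i\psi}$ and $z = re^{i\theta}$, the point $z' = re^{i(2\psi-\theta)}$ lies in $\mathbb{D}$ and satisfies $\widetilde{C}_{\phi_\alpha}(z') = \overline{\widetilde{C}_{\phi_\alpha}(z)} = \overline{w}$. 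Hence both $w$ and $\overline{w}$ belong to $\text{Ber}(C_{\phi_\alpha})$.

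The second step is the elementary identity
$$
\operatorname{Re}\{w\} = \frac{w + \overline{w}}{2},
$$
which exhibits $\operatorname{Re}\{\widetilde{C}_{\phi_\alpha}(z)\}$ as the midpoint of the segment joining $w$ and $\overline{w}$. Invoking the convexity assumption on $\text{Ber}(C_{\phi_\alpha})$, every convex combination of two of its elements again lies in the set; in particular the midpoint $\tfrac{1}{2}w + \tfrac{1}{2}\overline{w}$ does. Therefore $\operatorname{Re}\{\widetilde{C}_{\phi_\alpha}(z)\} \in \text{Ber}(C_{\phi_\alpha})$, and since $z \in \mathbb{D}$ was arbitrary, the claim follows for every $z \in \mathbb{D}$.

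There is no serious obstacle here: the corollary is a direct packaging of Proposition \ref{conj} (which guarantees the conjugate is realized in the range) with the definition of convexity applied to the specific convex combination with weights $\tfrac{1}{2}, \tfrac{1}{2}$. The only point meriting care is ensuring the companion point $z'$ genuinely lies in $\mathbb{D}$ so that its Berezin transform is a bona fide element of $\text{Ber}(C_{\phi_\alpha})$; this is immediate because $|z'| = r = |z| < 1$. The usefulness of the statement, anticipated by the remark that it will feed into the main theorem of the section, is that it forces the real axis to intersect the Berezin range in a controlled way, which is the leverage one then uses to constrain the admissible values of $\alpha$.
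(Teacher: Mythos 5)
Your proposal is correct and follows essentially the same route as the paper: invoke Proposition \ref{conj} to get $\overline{\widetilde{C}_{\phi_\alpha}(z)} \in \text{Ber}(C_{\phi_\alpha})$, then use convexity on the midpoint $\tfrac{1}{2}\widetilde{C}_{\phi_\alpha}(z) + \tfrac{1}{2}\overline{\widetilde{C}_{\phi_\alpha}(z)} = \mathrm{Re}\{\widetilde{C}_{\phi_\alpha}(z)\}$. Your added check that the companion point $re^{i(2\psi-\theta)}$ lies in $\mathbb{D}$ is a small but welcome detail the paper leaves implicit.
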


\begin{proof}
	Suppose $\text{Ber}(C_{\phi_\alpha})$ is convex. Then from Proposition \ref{conj} $\text{Ber}(C_{\phi_\alpha})$ is closed under complex conjugation. Therefore we have
	\begin{center}
		$\frac{1}{2}\widetilde{C}_{\phi_\alpha}(z) + \frac{1}{2}\overline{\widetilde{C}_{\phi_\alpha}(z)} = Re\{\widetilde{C}_{\phi_\alpha}(z)\} \in \text{Ber}(C_{\phi_\alpha})$.
	\end{center}
\end{proof}
Now, we use these tools proved above to characterize the convexity of the Berezin range for the Blaschke factor.
\begin{theorem}
Let $\mathrm{Im}\{\widetilde{C}_{\phi_\alpha}(z)\} = 0$  only when $\mathrm{Im}\{\overline{\alpha} z\} = 0$.  Then the Berezin range of $C_{\phi_\alpha}$ acting on  $\mathcal{D}$ is convex if and only if $\alpha = 0$.
\end{theorem}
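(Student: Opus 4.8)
The plan is to settle the backward implication by direct computation and prove the converse by contraposition, extracting from the standing hypothesis a precise description of the real points of the range. If $\alpha = 0$ then $\phi_0(z) = z$, so $C_{\phi_0}$ is the identity operator and $\widetilde{C}_{\phi_0}(z) = \langle \hat{k}_z,\hat{k}_z\rangle = 1$ for every $z$; hence $\text{Ber}(C_{\phi_0}) = \{1\}$ is a singleton and is convex. For the converse I would assume $\alpha = \rho e^{i\psi}$ with $\rho > 0$ and that $\text{Ber}(C_{\phi_\alpha})$ is convex, and work toward a contradiction.

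The first move is to read off the real trace of the range. Since the hypothesis gives $\mathrm{Im}\{\widetilde{C}_{\phi_\alpha}(z)\} = 0 \Rightarrow \mathrm{Im}\{\overline{\alpha} z\} = 0$, and $\mathrm{Im}\{\overline{\alpha} z\} = 0$ holds exactly on the diameter $L = \{te^{i\psi} : t \in (-1,1)\}$ — on which substitution into (\ref{3.1}) shows $\widetilde{C}_{\phi_\alpha}$ is real — one obtains $\text{Ber}(C_{\phi_\alpha}) \cap \mathbb{R} = \{g(t) : t \in (-1,1)\}$, where $g(t) := \widetilde{C}_{\phi_\alpha}(te^{i\psi})$. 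On $L$ one computes $\overline{z}\phi_\alpha(z) = u$ with $u = t(t-\rho)/(1-\rho t)$ and $1 - u = (1-t^2)/(1-\rho t) > 0$, so $u < 1$; writing $g(t) = \frac{-\log(1-u)/u}{-\log(1-t^2)/t^2}$ and using that $-\log(1-s)/s > 0$ for every $s < 1$ shows $g > 0$ throughout $(-1,1)$. As $g$ extends continuously to $[-1,1]$ with $g(0) = g(\pm 1) = 1$, the number $m := \inf_{t \in (-1,1)} g(t)$ is strictly positive, so every real point of the range is at least $m$.

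The second move produces a real point of the range below $m$. Fix a boundary direction $e^{i\eta}$ with $\eta \not\equiv \psi \pmod{\pi}$ and let $z = re^{i\eta}$ with $r \to 1^{-}$. Then $\mathrm{Im}\{\overline{z}\alpha\} \to \rho\sin(\psi-\eta) \neq 0$, so $1 - \overline{z}\phi_\alpha(z) = \frac{(1-|z|^2) + 2i\,\mathrm{Im}\{\overline{z}\alpha\}}{1 - \overline{\alpha} z}$ tends to a finite nonzero limit; consequently $k_z(\phi_\alpha(z))$ stays bounded while $\|k_z\|^2 \to \infty$, whence $\widetilde{C}_{\phi_\alpha}(z) \to 0$. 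By Corollary \ref{real} the number $\mathrm{Re}\{\widetilde{C}_{\phi_\alpha}(z)\}$ lies in $\text{Ber}(C_{\phi_\alpha})$ and is real, yet it tends to $0 < m$; choosing $r$ close enough to $1$ yields a real element of the range strictly below $m$, contradicting the previous step. Therefore $\rho = 0$, that is $\alpha = 0$.

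The crux is the lower bound $m > 0$ from the second paragraph: the entire scheme collapses if the diameter image were to accumulate at $0$, so the decisive point is that the real trace of the range is bounded away from the origin while the off-diameter boundary values decay to it. This lower bound reduces cleanly to the elementary positivity of $s \mapsto -\log(1-s)/s$ on $(-\infty,1)$ together with the identity $1 - u = (1-t^2)/(1-\rho t)$, the boundary values $g(\pm 1) = 1$ following from the same identity, and the positivity of the infimum following from continuity of $g$ on the compactification $[-1,1]$. The boundary estimate of the third paragraph, though it relies on the explicit form of $1 - \overline{z}\phi_\alpha(z)$, is routine once that expression is in hand.
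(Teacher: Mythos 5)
Your proof is correct and follows essentially the same route as the paper: the backward direction is identical, and the forward direction uses Corollary \ref{real} together with the standing hypothesis to force every real value of $\text{Ber}(C_{\phi_\alpha})$ onto the diameter through $\alpha$, then contrasts the behaviour on that diameter (values tending to $1$ at the boundary) with the decay to $0$ of the Berezin transform along off-diameter rays. The one point where you go beyond the paper is the uniform lower bound $m=\inf_{t\in(-1,1)}g(t)>0$, obtained from positivity of $s\mapsto -\log(1-s)/s$ and continuity of $g$ on the compactified diameter $[-1,1]$; the paper's proof merely juxtaposes the two limits $1$ and $0$ taken along different families of points, and it is precisely your bound $m>0$ that turns that juxtaposition into a genuine contradiction, so your write-up is, if anything, the tighter of the two.
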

\begin{proof}
If $\alpha = 0$, then $\widetilde{C}_{\phi_\alpha}(z) = 1$ for all $z \in \mathbb{D}$. So $\text{Ber}(C_{\phi_\alpha}) = \{1\}$, which is a singleton set in  $\mathbb{C}$ and is therefore convex. Conversely, suppose that $\text{Ber}(C_{\phi_\alpha})$ is convex. So by Corollary \ref{real} for each $z \in \mathbb{D}$, we can find  $w \in \mathbb{D}$ such that
$$\widetilde{C}_{\phi_\alpha}(w) = Re\{\widetilde{C}_{\phi_\alpha}(z)\}.$$
This gives $Im\{\widetilde{C}_{\phi_\alpha}(w)\} = 0.$ This happens if and only if $Im\{\overline{\alpha} w\} = 0$. This implies that $w \text{ and } \alpha$ lie on a line passing through the origin. So put $w = r\alpha, \text{ where } r \in (-1/|\alpha|, 1/|\alpha|)$. Then we have
\begin{center}
		\begin{equation*}
			\begin{split}
				\widetilde{C}_{\phi_\alpha}(w) &= Re\{\widetilde{C}_{\phi_\alpha}(z)\}\\
				&=c_{\alpha, r\alpha}[(|r\alpha|^2 - Re\{ \overline{\alpha} r\alpha \})(1 - Re\{\overline{\alpha} r\alpha\}) - (Im\{\overline{\alpha} r\alpha\})^2\\
				&\qquad + iIm\{\overline{\alpha} r\alpha\}(2Re\{\overline{\alpha} r\alpha\} - |r\alpha|^2 - 1)] \times\\
				&\qquad \log[b_{\alpha, z}[(1 - |r\alpha|^2)(1 - Re\{\overline{\alpha} r\alpha\}) + 2(Im\{\overline{\alpha} r\alpha\})^2]\\
				&\qquad + iIm\{\alpha \overline{r\alpha}\}[1 + |r\alpha|^2 - 2Re\{\overline{\alpha} r\alpha\}]]\\
				&= \frac{|r\alpha|^2 (|r\alpha|^2 - r| \alpha|^2)(1 - r|\alpha|^2) \log\left[\frac{(1 - |r\alpha|^2)(1 - r|\alpha|^2)}{(1 - |r\alpha|^2)^2}\right]}{(|r\alpha|^2 - r|\alpha|^2)^2 \log\left(\frac{1}{{1 - |r\alpha|^2}}\right)}\\
				&= \frac{|r\alpha|^2(1 - r|\alpha|^2)[\log(1 - |r\alpha|^2) - \log(1 - r|\alpha|^2)]}{{(|r\alpha|^2 - r|\alpha|^2) \log(1 - |r\alpha|^2)}}\\
				&= \frac{|r\alpha|^2(1 - r|\alpha|^2)}{(|r\alpha|^2 - r|\alpha|^2)}\left(1 - \frac{\log(1 - r|\alpha|^2)}{\log(1 - |r\alpha|^2)}\right).
			\end{split}
		\end{equation*}
	\end{center}
Observe that for $\alpha \neq 0,$ 
$$ \lim_{r \rightarrow \frac{1}{|\alpha|}} \widetilde{C}_{\phi_\alpha}(w) = 1.$$
Since $w=r\alpha$, as $r \rightarrow \frac{1}{|\alpha|}$, $|w| \rightarrow 1$. Therefore $\widetilde{C}_{\phi_\alpha}(w) = Re\{\widetilde{C}_{\phi_\alpha}(z)\} \rightarrow 1$ as $|w| \rightarrow 1$. But from the expression of $\widetilde{C}_{\phi_\alpha}(z)$ in Equation \ref{3.1}, putting $z=\rho e^{i\theta}$, we get
$$ \lim_{\rho \rightarrow 1^-} \widetilde{C}_{\phi_\alpha}(\rho e^{i\theta}) = 0.$$
This is a contradiction as the $Re\{\widetilde{C}_{\phi_\alpha}(\rho e^{i\theta})\} \rightarrow 0 \neq 1$ as $|\rho e^{i\theta}|=\rho \rightarrow 1$. Hence $\text{Ber}(C_{\phi_\alpha})$ cannot be convex unless $\alpha = 0$.
\end{proof}
\section{Unitary Equivalent Berezin range}
Let $\mathcal{H}$ be a reproducing kernel Hilbert space.  Then, for a bounded linear operator $T$ on $\mathcal{H}$, \textit{unitarily equivalent Berezin range} $\mathcal{B}(T)$ \cite{nordgren1994boundary} is defined as the set of all Berezin transforms of all operators that are unitarily equivalent to $T$. Note that for any two unitarily equivalent operators $A$ and $T$, we have $\mathcal{B}(A) =\mathcal{B}(T)$. 

\begin{theorem} \cite{gustafson1997numerical} (Elliptic range theorem)
Let $A$ be a $2\times 2$ matrix with complex entries and eigenvalues $\lambda_1$ and $\lambda_2$. Then the numerical range $W(A)$ is an elliptic disc with foci $\lambda_1$ and $\lambda_2$. 

\end{theorem}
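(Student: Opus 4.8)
The plan is to reduce to a normalized canonical form and then compute the support function of $W(A)$ explicitly. Two elementary properties of the numerical range will drive the reduction: it is invariant under unitary similarity, $W(U^{*}AU)=W(A)$, and it transforms affinely, $W(\alpha A+\beta I)=\alpha W(A)+\beta$. By Schur's theorem every $2\times 2$ matrix is unitarily similar to an upper-triangular matrix $\begin{pmatrix}\lambda_1 & c\\ 0 & \lambda_2\end{pmatrix}$ carrying the eigenvalues on its diagonal, so I may assume $A$ has this form. A diagonal unitary conjugation by $\mathrm{diag}(1,e^{i\varphi})$ rescales the phase of $c$ while fixing the diagonal, and subtracting $\tfrac{\lambda_1+\lambda_2}{2}I$ followed by a rotation and scaling $z\mapsto\alpha z$ sends the eigenvalues to $\pm\delta$. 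Since each of these moves carries an elliptic disc with foci $\lambda_1,\lambda_2$ to an elliptic disc whose foci are the images of $\lambda_1,\lambda_2$, it suffices to prove the statement for $A=\begin{pmatrix}\delta & c\\ 0 & -\delta\end{pmatrix}$ with $\delta,c\ge 0$ real, showing that $W(A)$ is the filled ellipse centered at the origin with foci $\pm\delta$.

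The next step is to invoke the support-function description of the numerical range. For each direction $\theta$, the Rayleigh--Ritz principle gives
\[
h(\theta):=\sup_{\|x\|=1}\mathrm{Re}\bigl(e^{-i\theta}\langle Ax,x\rangle\bigr)=\lambda_{\max}\bigl(\mathrm{Re}(e^{-i\theta}A)\bigr),\qquad \mathrm{Re}(B):=\tfrac12(B+B^{*}).
\]
For the normalized $A$ one computes
\[
\mathrm{Re}(e^{-i\theta}A)=\begin{pmatrix}\delta\cos\theta & \tfrac{c}{2}e^{-i\theta}\\[2pt] \tfrac{c}{2}e^{i\theta} & -\delta\cos\theta\end{pmatrix},
\]
a traceless $2\times 2$ Hermitian matrix whose eigenvalues are $\pm\sqrt{\delta^{2}\cos^{2}\theta+c^{2}/4}$. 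Hence $h(\theta)=\sqrt{\delta^{2}\cos^{2}\theta+\tfrac{c^{2}}{4}}$.

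Finally I would recognize this as the support function of an ellipse. Writing $\delta^{2}\cos^{2}\theta+\tfrac{c^{2}}{4}=(\delta^{2}+\tfrac{c^{2}}{4})\cos^{2}\theta+\tfrac{c^{2}}{4}\sin^{2}\theta$ yields $h(\theta)=\sqrt{a^{2}\cos^{2}\theta+b^{2}\sin^{2}\theta}$ with $a=\sqrt{\delta^{2}+c^{2}/4}$ and $b=c/2$, which is exactly the support function of the filled ellipse $\frac{u^{2}}{a^{2}}+\frac{v^{2}}{b^{2}}\le 1$; its foci sit at $\pm\sqrt{a^{2}-b^{2}}=\pm\delta$, i.e.\ at the eigenvalues. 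Because $W(A)$ is compact and, by the Toeplitz--Hausdorff theorem, convex, it is determined uniquely by its support function, so $W(A)$ coincides with this elliptic disc; undoing the affine normalization returns the foci to $\lambda_1,\lambda_2$ and finishes the argument.

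The main obstacle is bookkeeping rather than any single hard estimate: one must verify carefully that the identity $h(\theta)=\lambda_{\max}(\mathrm{Re}(e^{-i\theta}A))$ genuinely pins down the convex body, and then confirm that the single support-function formula absorbs the degenerate cases. These are exactly the coincident-eigenvalue case $\delta=0$ (where $a=b$ and the ellipse becomes the disc of radius $c/2$, the numerical range of a Jordan block) and the normal case $c=0$ (where $b=0$ and the ellipse collapses to the segment joining $\lambda_1$ and $\lambda_2$). Checking that the reduction steps preserve the foci through each degeneration is the most delicate part of the write-up.
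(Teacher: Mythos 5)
The paper does not prove this theorem: it is imported verbatim from \cite{gustafson1997numerical}, and only its consequences (the case analysis recorded in Remark \ref{ellipse}) are used later, so there is no internal proof to compare yours against; I can only judge your argument on its own terms, and it is correct and complete. Your route --- Schur triangularization, the affine moves $W(\alpha A+\beta I)=\alpha W(A)+\beta$, and then the support-function identity $h(\theta)=\lambda_{\max}\bigl(\mathrm{Re}(e^{-i\theta}A)\bigr)=\sqrt{a^{2}\cos^{2}\theta+b^{2}\sin^{2}\theta}$ with $a^{2}=\delta^{2}+c^{2}/4$, $b^{2}=c^{2}/4$ --- is the standard modern proof, and it reproduces exactly the quantitative data the paper records in Remark \ref{ellipse}: minor axis $2b=|m|$, major axis $2a=\sqrt{4r^{2}+|m|^{2}}$, and foci at the eigenvalues. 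Two points to tighten in the write-up. First, order the normalization correctly: the diagonal conjugation by $\mathrm{diag}(1,e^{i\varphi})$ that makes the off-diagonal entry nonnegative must be applied \emph{after} the rotation $e^{-i\mu}\bigl(A-\tfrac{\lambda_1+\lambda_2}{2}I\bigr)$, since that rotation re-phases $c$; as written you fix the phase first and then immediately destroy it (harmless, but it should be reordered). Second, be explicit that Toeplitz--Hausdorff is doing real work: the support function determines only the closed convex hull, so without convexity your computation shows only that the convex hull of $W(A)$ is the elliptic disc. Invoking \cite{toeplitz} is legitimate here (the paper cites it independently, and its standard proof establishes $2\times 2$ convexity without identifying the boundary as an ellipse, so there is no circularity), but if you wanted a proof self-contained even of convexity, you could instead parametrize the unit vectors of $\mathbb{C}^{2}$ and check directly that $x\mapsto\langle Ax,x\rangle$ fills the ellipse, which is essentially the computation carried out in \cite{gustafson1997numerical}.
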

\begin{remark}\label{ellipse}
Without loss of generality, consider an upper triangular matrix
$$
	A=\begin{bmatrix} 
	\lambda_1 & m \\
	0 & \lambda_2\\
	\end{bmatrix},
	$$
	where $\lambda_1$ and $\lambda_2$ are the eigen values of $A$. Here we discuss the shape of the $W(A)$ for different values of $\lambda_1,$ $\lambda_2$ and $m$.   If $\lambda_1 \neq \lambda_2$ and $m\neq 0$, the numerical range of $T$, $W(T)$ is an ellipse with foci at $\lambda_1,\lambda_2$, minor axis $|m|$ and major axis $\sqrt{4r^2+|m|^2}$, where $\frac{\lambda_1 - \lambda_2}{2} = re^{i\mu}$. If $\lambda_1 = \lambda_2 = \lambda$, then $W(A)$ is a circle with centre at $\lambda$ and radius $\frac{|m|}{2}$. If $\lambda_1 \neq \lambda_2$ and $m=0$, then $W(A)$ is the set of all convex combinations of $\lambda_1$ and $\lambda_2$ and is the line segment joining them. 
	\end{remark}
	\begin{lemma}\label{sub}
Let $T$ be a bounded linear operator on a reproducing kernel Hilbert space $\mathcal{H}$. Then	$\mathcal{B}(T) \subseteq W(T)$.
	\end{lemma}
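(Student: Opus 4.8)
The plan is to unwind the definition of $\mathcal{B}(T)$ and reduce the containment to two elementary facts already available: that the Berezin range of any operator sits inside its numerical range (noted in the introduction), and that the numerical range is invariant under unitary equivalence. First I would fix an arbitrary element $\lambda \in \mathcal{B}(T)$. By the definition $\mathcal{B}(T) = \bigcup_{A \in \mathcal{A}} \text{Ber}(A)$, there is an operator $A$ unitarily equivalent to $T$ with $\lambda \in \text{Ber}(A)$. Hence $\lambda = \langle A \hat{k}_x, \hat{k}_x\rangle_{\mathcal{H}}$ for some $x \in X$, where $\hat{k}_x$ is the normalized reproducing kernel at $x$, so that $\|\hat{k}_x\|_{\mathcal{H}} = 1$. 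Because $\hat{k}_x$ is a unit vector, this already exhibits $\lambda$ as a member of the numerical range of $A$; i.e.\ $\text{Ber}(A) \subseteq W(A)$.

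The remaining step is to replace $W(A)$ by $W(T)$. I would write $A = U^* T U$ for a unitary $U$ on $\mathcal{H}$ and set $v := U \hat{k}_x$. Then $\|v\|_{\mathcal{H}} = \|\hat{k}_x\|_{\mathcal{H}} = 1$ and
$$\lambda = \langle A \hat{k}_x, \hat{k}_x\rangle_{\mathcal{H}} = \langle U^* T U \hat{k}_x, \hat{k}_x\rangle_{\mathcal{H}} = \langle T v, v\rangle_{\mathcal{H}},$$
so $\lambda \in W(T)$. Since $\lambda$ was arbitrary, $\mathcal{B}(T) \subseteq W(T)$. Equivalently, one may bypass the explicit vector $v$ and simply invoke the unitary invariance $W(A) = W(T)$ to conclude $\text{Ber}(A) \subseteq W(A) = W(T)$ for every $A \in \mathcal{A}$, and then take the union over $\mathcal{A}$.

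I do not expect a serious obstacle here: the argument is a short chain of inclusions, and the only point requiring (routine) care is the unitary invariance of the numerical range, which follows at once from the substitution $v = U\hat{k}_x$ preserving norms. Conceptually, the statement just records that passing to unitarily equivalent copies can enlarge the Berezin range up to—but never beyond—the numerical range, which serves as the natural ceiling.
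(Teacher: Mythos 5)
Your proposal is correct and follows essentially the same route as the paper: take $\lambda \in \mathcal{B}(T)$, write it as $\langle A\hat{k}_x,\hat{k}_x\rangle$ for some $A$ unitarily equivalent to $T$, note $\lambda \in W(A)$ since $\hat{k}_x$ is a unit vector, and conclude via the unitary invariance $W(A)=W(T)$. The only difference is cosmetic: you spell out the invariance by the substitution $v = U\hat{k}_x$, whereas the paper simply asserts it.
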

	\begin{proof}
	Let $\alpha \in \mathcal{B}(T).$ Then  there exist a normalized reproducing kernel $\hat{k}_\alpha \in \mathcal{H}$ such that
$$\alpha = \langle A \hat{k}_\alpha,\hat{k}_\alpha\rangle_{\mathcal{H}}$$
where $A$ is an operators on $\mathcal{H}$ that is unitarily equivalent to $T$. Since $||\hat{k}_\alpha||=1$, it is easy to observe that $\alpha \in W(A)$. Also, note that $A$ is unitarily equivalent to $T$. Therefore $W(A)=W(T)$. This implies that $\alpha \in W(T)$. Hence $\mathcal{B}(T) \subseteq W(T)$.
	\end{proof}
	Now, we prove an analogous result of the elliptic range theorem for the unitarily equivalent Berezin range. This proof explicitly provides the $2\times 2$ unitary matrix corresponding to each element in the numerical range.

	\begin{theorem}\label{range}
	Let $T$ be a $2\times 2$ matrix with complex entries and eigenvalues $\lambda_1$ and $\lambda_2$. Then, the unitarily equivalent Berezin range $\mathcal{B}(T)$  is an elliptic disc with foci $\lambda_1$ and $\lambda_2$.
	\end{theorem}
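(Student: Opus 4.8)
The plan is to prove the stronger equality $\mathcal{B}(T) = W(T)$ and then read off the geometric shape from the Elliptic Range Theorem. One inclusion, $\mathcal{B}(T) \subseteq W(T)$, is already supplied by Lemma \ref{sub}, so the entire task reduces to establishing the reverse inclusion $W(T) \subseteq \mathcal{B}(T)$.

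The conceptual engine is that the unitary group of $\mathbb{C}^{2}$ acts transitively on the unit sphere, so that forming the union over the whole unitary orbit collapses the Berezin range onto the full numerical range. Indeed, for $A = U^{*}TU$ one has $\langle A\hat{k}_{x},\hat{k}_{x}\rangle = \langle T U\hat{k}_{x}, U\hat{k}_{x}\rangle$; fixing a single normalized reproducing kernel $\hat{k}_{x_{0}}$ (which exists as soon as $\mathcal{H}$ is a nontrivial RKHS) and letting $U$ vary, the vector $U\hat{k}_{x_{0}}$ sweeps out the entire unit sphere. Hence every value $\langle Tv, v\rangle$ with $\|v\| = 1$ is attained, giving $W(T) \subseteq \mathcal{B}(T)$.

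To realize this explicitly, as the statement advertises, I would argue pointwise. Given $\alpha \in W(T)$, choose a unit vector $v = (s,t)^{T}$ with $\langle Tv, v\rangle = \alpha$, and write the fixed kernel as $\hat{k}_{x_{0}} = (p,q)^{T}$. Completing each to an orthonormal basis produces the unitaries
$$ W = \begin{pmatrix} p & -\bar{q} \\ q & \bar{p} \end{pmatrix}, \qquad V = \begin{pmatrix} s & -\bar{t} \\ t & \bar{s} \end{pmatrix}, $$
and setting $U = V W^{*}$ gives $W^{*}\hat{k}_{x_{0}} = e_{1}$ and $V e_{1} = v$, so that $U\hat{k}_{x_{0}} = v$. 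This $U$ is the explicit unitary carrying the reproducing kernel to the vector representing $\alpha$. Taking $A = U^{*}TU$, a direct computation then yields $\widetilde{A}(x_{0}) = \langle Tv,v\rangle = \alpha$, so $\alpha \in \text{Ber}(A) \subseteq \mathcal{B}(T)$.

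Combining the two inclusions gives $\mathcal{B}(T) = W(T)$, and the Elliptic Range Theorem then identifies this common set as an elliptic disc with foci $\lambda_{1}$ and $\lambda_{2}$. I do not anticipate a genuine obstacle: the only items needing verification are that $U = VW^{*}$ is unitary and maps $\hat{k}_{x_{0}}$ to $v$ (a routine $2\times 2$ check), together with the mild standing assumption that the RKHS admits at least one normalized reproducing kernel. The real content is simply the observation that the unitary orbit of a single kernel already fills the sphere, which is precisely what makes the unitarily equivalent Berezin range recover the full, and hence elliptic, numerical range.
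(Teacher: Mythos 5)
Your proof is correct, but it takes a genuinely different route from the paper's. Both arguments share the same skeleton --- the inclusion $\mathcal{B}(T) \subseteq W(T)$ from Lemma \ref{sub}, the reverse inclusion $W(T) \subseteq \mathcal{B}(T)$, and then the Elliptic Range Theorem to identify the shape --- but you obtain the reverse inclusion in one stroke from transitivity of the unitary group on the unit sphere: any unit vector $v$ equals $U\hat{k}_{x_0}$ for some unitary $U$, so $\langle Tv,v\rangle = \langle U^{*}TU\,\hat{k}_{x_0},\hat{k}_{x_0}\rangle \in \text{Ber}(U^{*}TU) \subseteq \mathcal{B}(T)$, and your factorization $U = VW^{*}$ makes the unitary explicit; the $2\times 2$ checks you defer (unitarity of $W$ and $V$, and $U\hat{k}_{x_0}=v$) do go through. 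The paper instead splits into the three cases of Remark \ref{ellipse} ($\lambda_1 \neq \lambda_2$ with $m=0$; $\lambda_1 = \lambda_2$ with $m \neq 0$; $\lambda_1 \neq \lambda_2$ with $m \neq 0$), designs a separate parametrized family of unitaries for each case, reads off the diagonal entries of $U^{*}TU$ as elements of $\mathcal{B}(T)$ (using the fact, cited from \cite{cowen22}, that the Berezin range of a matrix is its set of diagonal entries), and in the genuinely elliptic case must still compute the union of a one-parameter family of circles --- an envelope calculation --- to recognize the filled ellipse. Your argument buys brevity and uniformity (no case analysis, no envelope computation, since the shape is simply inherited from $W(T)$), and it is more general: it uses only the existence of one normalized reproducing kernel, is independent of which RKHS structure underlies $\mathbb{C}^2$, and extends verbatim to a bounded operator on any RKHS, which would make the paper's subsequent theorem ($\mathcal{B}(T)=W(T)$ in general, proved there by compression to two-dimensional subspaces) essentially immediate. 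What the paper's longer computation buys is a concrete, geometry-adapted unitary for each point of the segment, disc, or ellipse, which is the explicitness advertised in its introduction --- though your $U = VW^{*}$ is arguably just as explicit.
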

	\begin{proof}
	
Without loss of generality, we can consider $T$ as an upper triangular matrix
$$
	\begin{bmatrix} 
	\lambda_1 & m \\
	0 & \lambda_2\\
	\end{bmatrix},
	$$
	where $\lambda_1$ and $\lambda_2$ are the eigen values of $T$. By Lemma \ref{sub}, $\mathcal{B}(T) \subseteq W(T)$. Here we prove that $W(T) \subseteq \mathcal{B}(T)$ and the  result follows.
	
If $\lambda_1\neq \lambda_2$ and $m=0$ we have 
	$$T= 
	\begin{bmatrix} 
	\lambda_1 & 0 \\
	0 & \lambda_2\\
	\end{bmatrix},
	$$
	and by the Remark \ref{ellipse}, $W(T)$ is the line segment joining $\lambda_1$ and $\lambda_2$. So every element in $W(T)$ will be of the form  $k\lambda_1 + (1-k) \lambda_2$, where $k\in [0,1]$. Define the matrix
	$$U= 
	\begin{bmatrix} 
	\sqrt{k} & \sqrt{1-k} \\
	\sqrt{1-k} & -\sqrt{k}\\
	\end{bmatrix},
	$$
where $k\in[0,1]$. Observe that 
$$U^*U=UU^*= 
	\begin{bmatrix} 
	\sqrt{k} & \sqrt{1-k} \\
	\sqrt{1-k} & -\sqrt{k}\\
	\end{bmatrix}\begin{bmatrix} 
	\sqrt{k} & \sqrt{1-k} \\
	\sqrt{1-k} & -\sqrt{k}\\
	\end{bmatrix}=\begin{bmatrix} 
	1 & 0 \\
	0 & 1\\
	\end{bmatrix}.
	$$
Thus, the matrix $U$ is unitary. We compute $U^*TU$.
	\begin{equation*}
	\begin{split}
	U^*TU &=\begin{bmatrix} 
	\sqrt{k} & \sqrt{1-k} \\
	\sqrt{1-k} & -\sqrt{k}\\
	\end{bmatrix}
	\begin{bmatrix} 
	\lambda_1 & 0 \\
	0 & \lambda_2\\
	\end{bmatrix}\begin{bmatrix} 
	\sqrt{k} & \sqrt{1-k} \\
	\sqrt{1-k} & -\sqrt{k}\\
	\end{bmatrix}\\
	&=\begin{bmatrix} 
	k\lambda_1 + (1-k)\lambda_2 & \sqrt{k} \sqrt{1-k}(\lambda_1-\lambda_2) \\
	\sqrt{k} \sqrt{1-k}(\lambda_1-\lambda_2) & (1-k)\lambda_1 + k\lambda_2\\
	\end{bmatrix}.
	\end{split}
	\end{equation*}
Since the Berezin range of a finite-dimensional matrix is its diagonal elements \cite[Section 3.1]{cowen22}, the diagonal elements of  $U^*TU$ belong to $\mathcal{B}(T).$ So $k\lambda_1 + (1-k)\lambda_2 \in \mathcal{B}(T) $. This implies that $W(T)\subseteq \mathcal{B}(T)$. Therefore $\mathcal{B}(T)$ is the line segment joining $\lambda_1$ and $\lambda_2$. 

Now consider the case $\lambda_1=\lambda_2=\lambda$ and $m=|m|e^{i\delta} \neq 0$. Then we have
$$T= 
	\begin{bmatrix} 
	\lambda & m \\
	0 & \lambda\\
	\end{bmatrix},
	$$
	and by the Remark \ref{ellipse}, $W(T)$ is a disc with centre at $\lambda$ and radius $\frac{|m|}{2}$. Therefore every element in $W(T)$ is of the form $\lambda+re^{i\theta} $ where $r\in [0,\frac{|m|}{2}]$. Define the matrix
	$$U= 
	\begin{bmatrix} 
	e^{-i(\theta-\delta)} \sin\alpha & \cos\alpha \\
	\cos\alpha & -e^{i(\theta-\delta)} \sin\alpha\\
	\end{bmatrix},
	$$
where $\alpha=\frac{1}{2}\sin^{-1}\left(\frac{2r}{|m|}\right)$. Observe that 
$$U^*U=
	\begin{bmatrix} 
	e^{i(\theta-\delta)} \sin\alpha & \cos\alpha \\
	\cos\alpha & -e^{-i(\theta-\delta)} \sin\alpha\\
	\end{bmatrix}\begin{bmatrix} 
	e^{-i(\theta-\delta)} \sin\alpha & \cos\alpha \\
	\cos\alpha & -e^{i(\theta-\delta)} \sin\alpha\\
	\end{bmatrix}=\begin{bmatrix} 
	1 & 0 \\
	0 & 1\\
	\end{bmatrix}
	$$
and
$$UU^*=
	\begin{bmatrix} 
	e^{-i(\theta-\delta)} \sin\alpha & \cos\alpha \\
	\cos\alpha & -e^{i(\theta-\delta)} \sin\alpha\\
	\end{bmatrix}\begin{bmatrix} 
	e^{i(\theta-\delta)} \sin\alpha & \cos\alpha \\
	\cos\alpha & -e^{-i(\theta-\delta)} \sin\alpha\\
	\end{bmatrix}=\begin{bmatrix} 
	1 & 0 \\
	0 & 1\\
	\end{bmatrix}.
	$$
So, the matrix $U$ is unitary. We compute $U^*TU$. 
	\begin{equation*}
	\begin{split}
	U^*TU &=\begin{bmatrix} 
	e^{i(\theta-\delta)} \sin\alpha & \cos\alpha \\
	\cos\alpha & -e^{-i(\theta-\delta)} \sin\alpha\\
	\end{bmatrix}
	\begin{bmatrix} 
	\lambda & m \\
	0 & \lambda\\
	\end{bmatrix}\begin{bmatrix} 
	e^{-i(\theta-\delta)} \sin\alpha & \cos\alpha \\
	\cos\alpha & -e^{i(\theta-\delta)} \sin\alpha\\
	\end{bmatrix}\\
	&=\begin{bmatrix} 
	\lambda(\sin^2\alpha+\cos^2\alpha) + me^{i(\theta-\delta)}\sin\alpha \cos\alpha & -me^{i2\theta}\sin^2\alpha \\
	m\cos^2\alpha & \lambda(\sin^2\alpha+\cos^2\alpha) - me^{i(\theta-\delta)}\sin\alpha \cos\alpha\\
	\end{bmatrix}\\
	&= \begin{bmatrix} 
	\lambda + me^{i(\theta-\delta)}\sin\alpha \cos\alpha & -me^{i2\theta}\sin^2\alpha \\
	m\cos^2\alpha & \lambda - me^{i(\theta-\delta)}\sin\alpha \cos\alpha\\
	\end{bmatrix}.
	\end{split}
	\end{equation*}
Since the diagonal elements of  $U^*TU$ belongs to $\mathcal{B}(T)$, $\lambda + me^{i(\theta-\delta)}\sin\alpha \cos\alpha \in \mathcal{B}(T).$ Substituting $m=|m|e^{i\delta}$ and $\alpha=\frac{1}{2}\sin^{-1}\left(\frac{2r}{|m|}\right)$, we get

\begin{equation*}
	\begin{split}
\lambda + me^{i(\theta-\delta)}\sin\alpha \cos\alpha &= \lambda + |m|e^{i\delta}e^{i(\theta-\delta)}\frac{\sin(2\alpha)}{2} \\
&= \lambda + |m|e^{i\theta}\frac{\sin(\sin^{-1}\left(\frac{2r}{|m|}\right))}{2}\\
&= \lambda + re^{i\theta}.
\end{split}
	\end{equation*}
Thus $\lambda + re^{i\theta} \in \mathcal{B}(T).$  This implies that $W(T)\subseteq \mathcal{B}(T)$. Therefore, $\mathcal{B}(T)$ is the disc with centre at $\lambda$ and radius $\frac{|m|}{2}$.

Now if $\lambda_1 \neq \lambda_2$ and $m\neq 0$, we have 
$$T= 
	\begin{bmatrix} 
	\lambda_1 & m \\
	0 & \lambda_2\\
	\end{bmatrix}.
	$$
	Then by the Remark \ref{ellipse}, $W(T)$ is an ellipse with foci at $\lambda_1,\lambda_2$, minor axis $|m|$ and major axis $\sqrt{4r^2+|m|^2}$, where $\frac{\lambda_1 - \lambda_2}{2} = re^{i\mu}$. Consider
\begin{equation*}
	\begin{split}
	T - \frac{\lambda_1 + \lambda_2}{2}=& 
	\begin{bmatrix} 
	\frac{\lambda_1 - \lambda_2}{2} & m \\
	0 & \frac{\lambda_2 - \lambda_1}{2}\\
	\end{bmatrix},\\
	e^{-i\mu} \left(T - \frac{\lambda_1 + \lambda_2}{2}\right)=& 
	\begin{bmatrix} 
	r & me^{-i\mu} \\
	0 & -r\\
	\end{bmatrix} =A,
	\end{split}
	\end{equation*}
 Then by elliptic range theorem the numerical range of $A$, $W(A)$ is an ellipse with centre at $(0,0)$ and minor axis $|m|$, major axis $\sqrt{4r^2+|m|^2}$  and foci at $(r,0)$ and $(-r,0)$. We claim that $\mathcal{B}(A) = W(A)$ and therefore the result follows. 

Define a matrix
$$U = \begin{bmatrix} 
	e^{i\alpha}\cos\theta & e^{i\beta}\sin\theta \\
	e^{i\gamma}\sin\theta & e^{i\delta}\cos\theta\\
	\end{bmatrix},$$
where $\delta-\gamma = \pi +(\beta - \alpha)$. Since $ e^{i\zeta} + e^{i(\pi + \zeta)} = 0$, 
 $$U^*U = \begin{bmatrix} 
	e^{-i\alpha}\cos\theta & e^{-i\gamma}\sin\theta \\
	e^{-i\beta}\sin\theta & e^{-i\delta}\cos\theta\\
	\end{bmatrix}\begin{bmatrix} 
	e^{i\alpha}\cos\theta & e^{i\beta}\sin\theta \\
	e^{i\gamma}\sin\theta & e^{i\delta}\cos\theta\\
	\end{bmatrix}= \begin{bmatrix} 
	1 & 0 \\
	0 & 1\\
	\end{bmatrix},$$
and
$$UU^* = \begin{bmatrix} 
	e^{i\alpha}\cos\theta & e^{i\beta}\sin\theta \\
	e^{i\gamma}\sin\theta & e^{i\delta}\cos\theta\\
	\end{bmatrix}\begin{bmatrix} 
	e^{-i\alpha}\cos\theta & e^{-i\gamma}\sin\theta \\
	e^{-i\beta}\sin\theta & e^{-i\delta}\cos\theta\\
	\end{bmatrix}= \begin{bmatrix} 
	1 & 0 \\
	0 & 1\\
	\end{bmatrix}.$$
Therefore, $U$ is a unitary matrix. Now, for the unitary matrix $U$, we compute $U^*AU$.
$$U^*AU = \begin{bmatrix} 
	e^{-i\alpha}\cos\theta & e^{-i\gamma}\sin\theta \\
	e^{-i\beta}\sin\theta & e^{-i\delta}\cos\theta\\
	\end{bmatrix} \begin{bmatrix} 
	r & me^{-i\mu} \\
	0 & -r\\
	\end{bmatrix}\begin{bmatrix} 
	e^{i\alpha}\cos\theta & e^{i\beta}\sin\theta \\
	e^{i\gamma}\sin\theta & e^{i\delta}\cos\theta\\
	\end{bmatrix}$$
	$$\tiny{=\begin{bmatrix} 
	r \cos^2\theta + \sin\theta \cos\theta me^{i(\gamma-\mu- \alpha)} - r\sin^2\theta & re^{i(\beta-\alpha)}\sin\theta  \cos\theta + m e^{i(\delta-\mu-\alpha)}\cos^2 \theta - re^{i(\delta- \gamma)} \cos\theta \sin\theta \\
	re^{i(\alpha-\beta)}\sin\theta  \cos\theta + m e^{i(\gamma-\mu-\beta)}\sin^2 \theta - re^{i(\gamma- \delta)} \cos\theta \sin\theta  & r \sin^2\theta + \sin\theta \cos\theta me^{i(\delta-\mu- \beta)} - r \cos^2\theta\\
	\end{bmatrix}}.
	$$

Since the diagonal elements of the matrix $U^*AU \in \mathcal{B}(A)$,  $r \cos^2\theta + \sin\theta \cos\theta me^{i(\gamma-\mu- \alpha)} - r\sin^2\theta \in \mathcal{B}(A)$. If $m=|m|e^{i\zeta}$,  observe that 
\begin{equation*}
\begin{split}
r \cos^2\theta + me^{i(\gamma-\mu- \alpha)}\sin\theta \cos\theta  - r\sin^2\theta &= r \cos 2\theta + \frac{1}{2} |m| \sin 2 \theta e^{i(\gamma-\mu- \alpha+\zeta)}\\
      &= x +iy
\end{split}
\end{equation*}	
where
\begin{equation*}
\begin{split}
x &= r \cos 2\theta + \frac{1}{2} |m| \sin 2 \theta \cos(\gamma-\mu- \alpha+\zeta),\\
      y &= \frac{1}{2} |m| \sin 2 \theta \sin(\gamma-\mu- \alpha+\zeta).
\end{split}
\end{equation*}	
Then $$ (x - r \cos 2\theta)^2 + y^2 = \frac{|m|^2}{4}\sin^2(2\theta).$$
This is a family of circles. We obtain their union.
Put $2\theta = \phi$
\begin{equation}\label{4.1}
(x - r \cos \phi)^2 + y^2 = \frac{|m|^2}{4}\sin^2\phi, \quad 0\leq \phi \leq \pi.
\end{equation} 
Now, differentiating the above equation with respect to $\phi$, we get
$$(x - r \cos \phi)r= \frac{|m|^2}{4}\cos\phi.$$
Therefore $$ \cos\phi = \frac{xr}{r^2 + \frac{|m|^2}{4}}.$$
Substituting $\cos\phi$ in equation \ref{4.1} and eliminating $\phi$, we get
$$ \frac{x^2}{r^2 + \frac{|m|^2}{4}} + \frac{y^2}{\frac{|m|^2}{4}} = 1.$$
This is the equation of the ellipse with centre at $(0,0)$ and minor axis $|m|$, major axis $\sqrt{4r^2+|m|^2}$  and foci at $(r,0)$ and $(-r,0)$. Therefore we get $\mathcal{B}(A)$ is an ellipse with foci at $\lambda_1,\lambda_2$, minor axis $|m|$ and major axis $\sqrt{4r^2+|m|^2}$.
	\end{proof}

\begin{theorem}
Let $\mathcal{H}$ be an RKHS and $T$ be a bounded linear operator on $\mathcal{H}$. Then $\mathcal{B}(T) = W(T)$, and therefore $\mathcal{B}(T)$ is convex.
\end{theorem}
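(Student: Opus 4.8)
The plan is to leverage Lemma \ref{sub}, which already supplies the inclusion $\mathcal{B}(T) \subseteq W(T)$, and then establish the reverse inclusion $W(T) \subseteq \mathcal{B}(T)$ directly. Once both inclusions are in hand we obtain $\mathcal{B}(T) = W(T)$, and the convexity of $\mathcal{B}(T)$ follows at once from the Toeplitz--Hausdorff theorem \cite{toeplitz}, since the numerical range $W(T)$ is always convex.

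To prove $W(T) \subseteq \mathcal{B}(T)$, I would begin with an arbitrary $\alpha \in W(T)$ and write $\alpha = \langle Tu,u\rangle_{\mathcal{H}}$ for some unit vector $u \in \mathcal{H}$. The strategy is to transport a fixed normalized reproducing kernel onto $u$ by a unitary. Since $\mathcal{H} \neq \{0\}$, there is a point $x_0 \in X$ with $k_{x_0} \neq 0$ (otherwise $f(x) = \langle f,k_x\rangle_{\mathcal{H}} = 0$ for all $f$ and all $x$, forcing $\mathcal{H} = \{0\}$), so that $\hat{k}_{x_0}$ is a genuine unit vector of $\mathcal{H}$.

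The crux of the argument is to produce a unitary $U$ on $\mathcal{H}$ with $U\hat{k}_{x_0} = u$; this is the one step that requires (routine) care, the point being that any two unit vectors in a Hilbert space are related by a unitary. Concretely, I would decompose
$$\mathcal{H} = \mathbb{C}\hat{k}_{x_0} \oplus \{\hat{k}_{x_0}\}^{\perp} = \mathbb{C}u \oplus \{u\}^{\perp}.$$
The two orthogonal complements are closed subspaces of $\mathcal{H}$ of the same Hilbert-space dimension, so there exists a unitary $V \colon \{\hat{k}_{x_0}\}^{\perp} \to \{u\}^{\perp}$; setting $U(\lambda\hat{k}_{x_0} + w) = \lambda u + Vw$ then defines a unitary operator on $\mathcal{H}$ with $U\hat{k}_{x_0} = u$.

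Finally, I would put $A = U^{*}TU$. Then $A$ is unitarily equivalent to $T$, so $A \in \mathcal{A}$, and
$$\langle A\hat{k}_{x_0},\hat{k}_{x_0}\rangle_{\mathcal{H}} = \langle TU\hat{k}_{x_0},U\hat{k}_{x_0}\rangle_{\mathcal{H}} = \langle Tu,u\rangle_{\mathcal{H}} = \alpha,$$
so that $\alpha \in \text{Ber}(A) \subseteq \mathcal{B}(T)$. As $\alpha \in W(T)$ was arbitrary, this gives $W(T) \subseteq \mathcal{B}(T)$, hence $\mathcal{B}(T) = W(T)$, and convexity follows. The only genuine subtlety to watch is the existence of $U$; every other step is bookkeeping.
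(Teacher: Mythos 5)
Your proof is correct, and it takes a genuinely different route from the paper's. The paper establishes $W(T)\subseteq\mathcal{B}(T)$ by compressing $T$ to the two-dimensional subspace $\mathcal{M}$ spanned by the vectors $u,v$ realizing two given points of $W(T)$, applying the elliptic range theorem to $PTP$ and then Theorem~\ref{range} (its own $2\times 2$ analogue, proved with explicitly constructed unitary matrices) to conclude $\alpha,\beta\in\mathcal{B}(PTP)$, and finally lifting back to $\mathcal{H}$. You skip the two-dimensional reduction entirely: the transitivity of the unitary group on the unit sphere of $\mathcal{H}$ lets you carry one fixed normalized kernel $\hat{k}_{x_0}$ onto the unit vector $u$, giving $\langle Tu,u\rangle=\langle U^*TU\,\hat{k}_{x_0},\hat{k}_{x_0}\rangle\in\text{Ber}(U^*TU)\subseteq\mathcal{B}(T)$ in a single step. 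Your route buys both brevity and rigor: it needs neither the elliptic range theorem nor Theorem~\ref{range}, and it sidesteps the delicate point in the paper's lifting step, where ``normalized reproducing kernels $\hat{k}_\alpha,\hat{k}_\beta\in\mathcal{M}$'' are invoked even though the reproducing kernels of $\mathcal{H}$ need not lie in the compression subspace $\mathcal{M}$ at all; your pairing is against an honest kernel of $\mathcal{H}$ throughout. You are also right that the only two points requiring care are the existence of $x_0$ with $k_{x_0}\neq 0$ (guaranteed because $W(T)\neq\emptyset$ forces $\mathcal{H}\neq\{0\}$) and the existence of the unitary $V$ between the orthogonal complements, which holds since both complements have the same Hilbert dimension whether $\mathcal{H}$ is finite- or infinite-dimensional. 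What the paper's longer route buys is not extra generality but coherence with the rest of Section 4: it reuses Theorem~\ref{range} and its explicit $2\times 2$ unitaries, whereas your unitary $V$ is non-constructive.
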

\begin{proof}
From Lemma \ref{sub}, it is clear that $\mathcal{B}(T) \subseteq W(T)$. So it is enough to show that $W(T) \subseteq \mathcal{B}(T)$.

Let $\alpha ,\beta \in W(T)$. Then $\alpha = \langle Tu,u\rangle$ and $\beta = \langle Tv,v \rangle$, where $u$ and $v$ are unit vectors in $\mathcal{H}$, i.e., $||u|| = ||v|| = 1$. Let $\mathcal{M}$ be the subspace of $\mathcal{H}$ spanned by $u$ and $v$ and $P$ be the orthogonal projection of $\mathcal{H}$ on $\mathcal{M}$, so that $Pu=u$ and $Pv=v$. Also note that for the operator $PTP$ on $\mathcal{M}$,
$$\langle PTPu,u \rangle = \langle Tu,u\rangle$$
and
$$\langle PTPv,v \rangle = \langle Tv,v\rangle .$$
Note that $PTP$ is an operator in a two-dimensional space. So by elliptic range theorem, $W(PTP)$ is an ellipse and $\alpha ,\beta \in W(PTP)$ . From Theorem \ref{range}, it is easy to observe that $\mathcal{B}(PTP) = W(PTP)$. Therefore  $\alpha ,\beta \in \mathcal{B}(PTP)$.
Now as $\alpha ,\beta \in \mathcal{B}(PTP)$, there are normalized reproducing kernels $\hat{k}_\alpha, \hat{k}_\beta \in \mathcal{M}$ such that
$$\alpha = \langle PAP \hat{k}_\alpha,\hat{k}_\alpha\rangle$$
and 
$$\beta = \langle PCP \hat{k}_\beta,\hat{k}_\beta\rangle$$
where $A$ and $C$ are operators on $\mathcal{H}$ that are unitarily equivalent to $T$. Therefore we have
$$\alpha = \langle PAP \hat{k}_\alpha,\hat{k}_\alpha\rangle = \langle A \hat{k}_\alpha,\hat{k}_\alpha\rangle \in \mathcal{B}(T)$$
and 
$$\beta = \langle PCP \hat{k}_\beta,\hat{k}_\beta\rangle = \langle C \hat{k}_\beta,\hat{k}_\beta\rangle  \in \mathcal{B}(T).$$
This implies  $\alpha,\beta \in \mathcal{B}(T).$ Hence $W(T) \subseteq \mathcal{B}(T)$.
\end{proof} 

\textbf{Declaration of competing interest}

There is no competing interest.\\

\textbf{Data availability}

No data was used for the research described in the article.\\

{\bf Acknowledgments.} The first author is supported by the Senior Research Fellowship (09/0239(13298)/2022-EM) of CSIR (Council of Scientific and Industrial Research, India). The second author is supported by the Researchers Supporting Project number(RSPD2024R1056), King Saud University, Riyadh, Saudi Arabia. The third author is supported by the Teachers Association for Research Excellence (TAR/2022/000063) of SERB (Science and Engineering Research Board, India).

\nocite{*}
\bibliographystyle{amsplain}
\bibliography{database}  

\providecommand{\bysame}{\leavevmode\hbox to3em{\hrulefill}\thinspace}
\providecommand{\MR}{\relax\ifhmode\unskip\space\fi MR }
\providecommand{\MRhref}[2]{%
  \href{http://www.ams.org/mathscinet-getitem?mr=#1}{#2}
}
\providecommand{\href}[2]{#2}
\begin{thebibliography}{10}

\bibitem{augustine2023composition}
A.~Augustine, M.~Garayev, and P.~Shankar, \emph{Composition operators,
  convexity of their {B}erezin range and related questions}, Complex Anal.
  Oper. Theory \textbf{17} (2023), no.~8, Paper No. 126, 22. \MR{4657345}

\bibitem{berezin1972covariant}
F.~A. Berezin, \emph{Covariant and contravariant symbols of operators}, Izv.
  Akad. Nauk SSSR Ser. Mat. \textbf{36} (1972), 1134--1167. \MR{350504}

\bibitem{carswell2003composition}
B.~J. Carswell, B.~D. MacCluer, and A.~Schuster, \emph{Composition operators on
  the {F}ock space}, Acta Sci. Math. (Szeged) \textbf{69} (2003), no.~3-4,
  871--887. \MR{2034214}

\bibitem{cowen22}
C.~C. Cowen and C.~Felder, \emph{Convexity of the {B}erezin range}, Linear
  Algebra Appl. \textbf{647} (2022), 47--63. \MR{4413326}

\bibitem{el2014primer}
O.~El-Fallah, K.~Kellay, J.~Mashreghi, and T.~Ransford, \emph{A primer on the
  {D}irichlet space}, Cambridge Tracts in Mathematics, vol. 203, Cambridge
  University Press, Cambridge, 2014. \MR{3185375}

\bibitem{toeplitz}
K.~Gustafson, \emph{The {T}oeplitz-{H}ausdorff theorem for linear operators},
  Proc. Amer. Math. Soc. \textbf{25} (1970), 203--204. \MR{262849}

\bibitem{gustafson1997numerical}
K.~E. Gustafson and D.~K.~M. Rao, \emph{Numerical range}, Universitext,
  Springer-Verlag, New York, 1997, The field of values of linear operators and
  matrices. \MR{1417493}

\bibitem{karaev2013reproducing}
M.~T. Karaev, \emph{Reproducing kernels and {B}erezin symbols techniques in
  various questions of operator theory}, Complex Anal. Oper. Theory \textbf{7}
  (2013), no.~4, 983--1018. \MR{3079840}

\bibitem{nordgren1994boundary}
E.~Nordgren and P.~Rosenthal, \emph{Boundary values of {B}erezin symbols},
  Nonselfadjoint operators and related topics ({B}eer {S}heva, 1992), Oper.
  Theory Adv. Appl., vol.~73, Birkh\"{a}user, Basel, 1994, pp.~362--368.
  \MR{1320554}

\bibitem{paulsen2016introduction}
V.~I. Paulsen and M.~Raghupathi, \emph{An introduction to the theory of
  reproducing kernel {H}ilbert spaces}, Cambridge Studies in Advanced
  Mathematics, vol. 152, Cambridge University Press, Cambridge, 2016.
  \MR{3526117}

\bibitem{zhu2012analysis}
K.~Zhu, \emph{Analysis on {F}ock spaces}, Graduate Texts in Mathematics, vol.
  263, Springer, New York, 2012. \MR{2934601}

\end{thebibliography}
\end{document}